\newtheorem{theorem}{Theorem}[section]
\newtheorem{corollary}[theorem]{Corollary}
\newtheorem{proposition}[theorem]{Proposition}
\newenvironment{proof}{{\it Proof :~}}{\hfill$\diamondsuit$\\}
\newtheorem{example}{Example}
\DeclareMathOperator*{\col}{col}
\DeclareMathOperator{\eps}{\varepsilon}
\DeclareMathOperator{\co}{\mathbf{co}}
\begin{document}


\begin{frontmatter}

\title{Convex lifted conditions for robust stability analysis and stabilization of linear discrete-time switched systems}

\author{Corentin Briat}\ead{briatc@bsse.ethz.ch,corentin@briat.info}\ead[url]{http://www.briat.info}

\address{Swiss Federal Institute of Technology--Z\"{u}rich (ETH-Z), Department of Biosystems Science and Engineering (D-BSSE), Switzerland.}

%

\begin{keyword}
Switched systems; uncertain systems; stability; stabilization; $\ell_2$-gain
\end{keyword}

\begin{abstract}
Stability analysis of discrete-time switched systems under minimum dwell-time is studied using a new type of LMI conditions. These conditions are convex in the matrices of the system and shown to be equivalent to the nonconvex conditions proposed by Geromel and Colaneri in \citep{Geromel:06d}. The convexification of the conditions is performed by a lifting process which introduces a moderate number of additional decision variables. The convexity of the conditions can be exploited to extend the results to uncertain systems, control design and $\ell_2$-gain computation without introducing additional conservatism. Several examples are presented to show the effectiveness of the approach.
\end{abstract}

\end{frontmatter}

\section{Introduction}

Switched systems have been shown to provide a general framework for modeling real-world systems such as time-delay systems \citep{Hetel:08b}, networked control systems \citep{Donkers:11}, biomedical problems \citep{Hernandez:11}, etc. Both continuous-time \citep{Morse:96,Hespanha:99,Goebel:09} and discrete-time instances \citep{Daafouz:02,Hetel:07,Lin:08,Colaneri:11,Geromel:06d} of these systems have been theoretically studied in the literature over the past decades.  Due to their time-varying nature, they may indeed exhibit very interesting and intricate behaviors. For instance, switching between stable subsystems may not result in an overall stable switched system whereas switching between unstable subsystems may give rise to asymptotically stable trajectories; see e.g. \citep{Decarlo:00,Liberzon:99}.

A way for analyzing stability of switching systems is through the notion of dwell-time: minimum \citep{Morse:96,Geromel:06b,Geromel:06d} and average dwell-times \citep{Hespanha:99,Zhang:08,Zhang:09} are the most usual ones. It has been shown quite recently that stability under minimum dwell-time can be analyzed in a simple way for both continuous- and discrete-time systems \citep{Geromel:06b,Geromel:06d,Chesi:12}. For linear systems, the conditions obtained using quadratic Lyapunov functions are expressed in terms of LMIs which may sometimes yield  tight results, even though these conditions are not necessary in general. Necessity can be recovered by considering more general Lyapunov functions, such as homogeneous ones \cite{Chesi:12}. The obtained stability conditions are, however, nonconvex in the system matrices and are, therefore, difficult to extend  to uncertain systems and to control design. 

In this paper, the minimum dwell-time conditions of \citep{Geromel:06d} are considered back and reformulated in a `lifting setting', which has been recently considered in \citep{Briat:13d}, in the continuous-time setting, in order to overcome certain limitations in control design arising in the use of certain functionals, such as looped-functionals; see e.g. \citep{Seuret:12,Briat:11l,Briat:12d,Briat:13b}. The lifted conditions, taking the form of a sequence of inter-dependent LMIs, are shown to be equivalent to the conditions of \citep{Geromel:06d}. Despite being equivalent, they are convex in the matrices of the system, a key property for considering uncertain systems and for obtaining tractable design conditions. The approach is finally extended to the problem of computation of the $\ell_2$-gain of discrete-time switched systems under dwell-time constraint. Some remarks on the associated stabilization problem are provided as well. Several numerical examples are considered in order to emphasize the effectiveness of the approach.

\vspace{-2mm}\textbf{Outline:} Preliminaries are given in Section \ref{sec:prel}. Section \ref{sec:affine} is devoted to stability analysis of switched systems using novel convex conditions. These results are extended in Section \ref{sec:rob} to the case of uncertain systems whereas Section \ref{sec:stabz} pertains on stabilization. Section \ref{sec:ext} finally addresses the computation of an upper-bound on the $\ell_2$-gain under dwell-time constraint. Examples are treated in the related sections.

\vspace{-2mm}\textbf{Notations:} The set of $n\times n$ (positive definite) symmetric matrices is denoted by ($\mathbb{S}_{\succ0}^n$) $\mathbb{S}^n$. Given two symmetric matrices $A,B$, the inequality $A\succ(\succeq) B$ means that $A-B$ is positive (semi)definite. The transpose of the matrix $A$ is denoted by $A^\prime$. The $\ell_2$-norm of the sequence $w:\mathbb{N}\to\mathbb{R}^n$ is denoted by $\textstyle||w||_{\ell_2}:=\left(\sum_{k=0}^\infty w(k)^\prime w(k)\right)^{1/2}$.

\section{Preliminaries}\label{sec:prel}

Let us consider the following class of linear switched systems
\begin{equation}\label{eq:mainsyst}
\begin{array}{rcl}
  x(t+1)&=&A_{\sigma(t)}x(t)\\
  x(t_0)&=&x_0
\end{array}
\end{equation}
where $x,x_0\in\mathbb{R}^n$ are the state of the system and the initial condition, respectively. The switching signal $\sigma$ is defined as
\begin{equation}
  \sigma:\mathbb{N}\to\{1,\ldots,N\}
\end{equation}
where $N$ is the number of subsystems involved in the switched system. Let the sequence $\{\phi_q\}_{q\in\mathbb{N}}$ be the sequence of switching instants, i.e. the instants where $\sigma(t)$ changes value and let $\tau_q:=\phi_{q+1}-\phi_q$ be the so-called dwell-time. By convention, we set $\phi_0=0$.


We recall now several existing results on stability of discrete-time switched systems. The following one has been derived in \citep{Geromel:06d}:
\begin{theorem}\label{th:geromel}
  Assume that, for some $\tau>0$, there exist matrices $P_i\in\mathbb{S}^n_{\succ0}$, $i=1,\ldots,N$, such that the LMIs
  \begin{equation}
    A_i^\prime P_iA_i-P_i\prec0
  \end{equation}
  and
  \begin{equation}\label{eq:geromel2}
    A_i^{\prime\tau}P_j A_i^\tau-P_i\prec0
  \end{equation}
  hold for all $i,j=1,\ldots,N$, $i\ne j$. Then, the switched system \eqref{eq:mainsyst} is asymptotically stable for all switching-time sequences $\{\phi_q\}_{q\in\mathbb{N}}$ satisfying $\tau_q\ge \tau$.
\end{theorem}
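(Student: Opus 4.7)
The plan is to construct a multiple Lyapunov function of the form $V(t,x) = x^\prime P_{\sigma(t)} x$ and to track its evolution along trajectories of \eqref{eq:mainsyst}, distinguishing the behaviour strictly inside a mode from the behaviour across a switching instant. Within a mode, the first LMI delivers the usual Lyapunov decrease; across a switch, the second LMI ensures that the Lyapunov matrix swap $P_i \to P_j$ does not undo that decrease, provided the dwell-time bound $\tau$ has been reached.

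Concretely, within a single mode, say $\sigma(t) = i$ on $[\phi_q, \phi_{q+1})$, the first LMI $A_i^\prime P_i A_i - P_i \prec 0$ implies that $x(t)^\prime P_i x(t)$ is strictly decreasing at every step and, by iteration, that $(A_i^k)^\prime P_i A_i^k \preceq P_i$ for every $k \geq 0$. In particular, the trajectory remains uniformly bounded on $[\phi_q, \phi_{q+1}]$ by a multiple of $\|x(\phi_q)\|$ depending only on the condition number of $P_i$. The decisive step is the behaviour across a switching instant. Setting $i = \sigma(\phi_q)$, $j = \sigma(\phi_{q+1})$, and $\tau_q = \phi_{q+1} - \phi_q \geq \tau$, we have $x(\phi_{q+1}) = A_i^{\tau_q} x(\phi_q)$, so it suffices to prove
\[
(A_i^{\tau_q})^\prime P_j A_i^{\tau_q} \prec P_i \qquad \text{for every } \tau_q \geq \tau.
\]
The case $\tau_q = \tau$ is precisely \eqref{eq:geromel2}. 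For $\tau_q > \tau$, I factor $A_i^{\tau_q} = A_i^\tau A_i^{\tau_q - \tau}$ and apply \eqref{eq:geromel2} to the inner sandwich to obtain $(A_i^{\tau_q})^\prime P_j A_i^{\tau_q} \prec (A_i^{\tau_q - \tau})^\prime P_i A_i^{\tau_q - \tau}$; the iterated first LMI then bounds the right-hand side by $P_i$, closing the inequality.

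To conclude, a compactness argument over the finite index set $\{(i,j) : i \neq j\}$ converts the strict inequality at $\tau_q = \tau$ into a uniform contraction constant $\alpha \in (0,1)$ with $(A_i^\tau)^\prime P_j A_i^\tau \preceq \alpha P_i$, and the factorization above inherits this constant for every $\tau_q \geq \tau$. Hence the sampled sequence $V(\phi_q, x(\phi_q))$ decays geometrically in $q$, and together with the intramode boundedness this forces $x(t) \to 0$ as $t \to \infty$. The main obstacle is precisely this promotion of pointwise strict LMIs to a uniform exponential rate on the unbounded family of admissible dwell-times $\tau_q \in \{\tau, \tau+1, \ldots\}$; the saving observation is that once $\tau_q$ exceeds $\tau$ the extra mode iterations can only contract further via the first LMI, so the rate at the critical value $\tau_q = \tau$ is enough.
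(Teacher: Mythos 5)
The paper itself gives no proof of this statement: Theorem~\ref{th:geromel} is recalled verbatim from \citep{Geromel:06d}, so there is no internal argument to compare against. Judged on its own, your proof is correct and is essentially the canonical dwell-time argument of that reference: the multiple Lyapunov function $V_f(x,\sigma)=x^\prime P_\sigma x$, intramode non-increase from $A_i^\prime P_i A_i \prec P_i$, the factorization $A_i^{\tau_q}=A_i^{\tau}A_i^{\tau_q-\tau}$ to push the cross-switch inequality \eqref{eq:geromel2} from $\tau_q=\tau$ to all $\tau_q\ge\tau$, and the promotion of the finitely many strict LMIs to a uniform contraction factor $\alpha\in(0,1)$ so that the sampled values $V(\phi_q,x(\phi_q))$ decay geometrically rather than merely monotonically. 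This last step is indeed the point most often glossed over, and you handle it properly.

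Two small points of precision, neither of which breaks the argument. First, your intermediate inequality $(A_i^{\tau_q})^\prime P_j A_i^{\tau_q} \prec (A_i^{\tau_q-\tau})^\prime P_i A_i^{\tau_q-\tau}$ is only $\preceq$ when $A_i$ is singular (congruence by a rank-deficient matrix does not preserve strictness); the overall strict bound by $P_i$ still follows because the iterated first LMI contributes a strict link, $(A_i^{k})^\prime P_i A_i^{k} \preceq A_i^\prime P_i A_i \prec P_i$ for $k\ge1$. Second, your concluding sentence implicitly assumes infinitely many switching instants; if $\sigma$ eventually stops switching, geometric decay of the sampled sequence says nothing about the tail, and you should add the one-line observation that $A_i^\prime P_i A_i\prec P_i$ with $P_i\succ0$ makes each $A_i$ Schur, so the state decays inside the terminal mode. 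With these cosmetic repairs the proof is complete.
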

When $\tau=1$ we get, as a corollary, the following result initially proved in \citep{Daafouz:02}:
\begin{corollary}\label{th:daafouz}
   Assume that there exist matrices $P_i\in\mathbb{S}^n_{\succ0}$, $i=1,\ldots,N$, such that the LMIs
  \begin{equation}
    A_i^\prime P_jA_i-P_i\prec0
  \end{equation}
   hold for all $i,j=1,\ldots,N$. Then, the switched system \eqref{eq:mainsyst} is asymptotically stable for any switching-time sequence $\{\phi_q\}_{q\in\mathbb{N}}$.
\end{corollary}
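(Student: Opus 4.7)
The plan is to derive this corollary as a direct specialization of Theorem \ref{th:geromel} by setting $\tau=1$. First I would observe that with $\tau=1$, the LMI \eqref{eq:geromel2} collapses to $A_i^\prime P_j A_i - P_i \prec 0$ for all $i \neq j$, which, together with the first LMI $A_i^\prime P_i A_i - P_i \prec 0$ (the diagonal case $i=j$), reconstitutes exactly the single family $A_i^\prime P_j A_i - P_i \prec 0$ for all $i,j\in\{1,\ldots,N\}$ assumed in the corollary. Second, I would note that any switching-time sequence $\{\phi_q\}_{q\in\mathbb{N}}$ automatically satisfies $\tau_q\geq 1$ since the $\phi_q$ are distinct nonnegative integers; hence the dwell-time constraint $\tau_q\geq\tau$ of Theorem \ref{th:geromel} imposes no restriction. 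The conclusion of Theorem \ref{th:geromel} then delivers asymptotic stability for arbitrary switching, which is the claim.

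If a self-contained argument is preferred, the same result follows from the switching-dependent quadratic Lyapunov function $V(t,x):=x^\prime P_{\sigma(t)} x$. Along trajectories of \eqref{eq:mainsyst}, its forward difference
\[
\Delta V(t) = x(t)^\prime\bigl(A_{\sigma(t)}^\prime P_{\sigma(t+1)} A_{\sigma(t)} - P_{\sigma(t)}\bigr)x(t)
\]
is strictly negative on $x(t)\neq 0$ by applying the hypothesis with $i=\sigma(t)$, $j=\sigma(t+1)$. Combined with the uniform sandwich $\min_i\lambda_{\min}(P_i)\,\|x\|^2 \leq V(t,x) \leq \max_i\lambda_{\max}(P_i)\,\|x\|^2$, which is well-defined because the index set is finite, this yields global asymptotic stability uniformly in the switching signal.

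I do not anticipate any substantive obstacle: the result is essentially a relabeling of Theorem \ref{th:geromel} in the trivial dwell-time regime. The only point requiring care is the bookkeeping that recombines the two LMI families of the theorem into the single family of the corollary without duplication or omission, and the observation that the case $i=j$ in the corollary hypothesis is exactly supplied by the first inequality of Theorem \ref{th:geromel}.
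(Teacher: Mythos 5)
Your proposal is correct and matches the paper's own (implicit) argument exactly: the paper presents this result precisely as the $\tau=1$ specialization of Theorem \ref{th:geromel}, with the LMI family of the corollary recombining the two LMI families of the theorem and the dwell-time constraint $\tau_q\ge 1$ being vacuous for integer switching instants. Your supplementary self-contained Lyapunov argument via $V(t,x)=x^\prime P_{\sigma(t)}x$ is also sound, but it is not needed for the reduction.
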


The following result is equivalent to Theorem \ref{th:geromel}:
\begin{theorem}\label{th:geromel2}
  Assume that, for some $\tau>0$, there exist matrices $P_i\in\mathbb{S}^n_{\succ0}$, $i=1,\ldots,N$, such that the LMIs
  \begin{equation}
    A_i^\prime P_iA_i-P_i\prec0
  \end{equation}
  and
  \begin{equation}
    A_i^{\prime\tau}P_i A_i^\tau-P_j\prec0
  \end{equation}
  hold for all $i,j=1,\ldots,N$, $i\ne j$. Then, the switched system \eqref{eq:mainsyst} is asymptotically stable for all switching-time sequences $\{\phi_q\}_{q\in\mathbb{N}}$ satisfying $\tau_q\ge \tau$.
\end{theorem}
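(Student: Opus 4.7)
The plan is to establish asymptotic stability directly, in the spirit of the proof of Theorem~\ref{th:geromel} in \citep{Geromel:06d}, but with a Lyapunov-type function whose indexing is shifted by one switch to accommodate the swapped roles of $P_i$ and $P_j$ in the second LMI. Indeed, the inequality $A_i^{\prime\tau} P_i A_i^\tau \prec P_j$ places the matrix associated with the iterated mode on the left and that of the other mode on the right, which is the opposite of the form used in Theorem~\ref{th:geromel}. I would therefore take as candidate
\[
V(t) := x(t)^\prime P_{\sigma(t-1)} x(t),
\]
so that at every switching instant the index of $P$ inside $V$ refers to the mode whose dynamics have just been iterated over the preceding dwell interval.

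The next step is to compare $V$ at two consecutive switching instants. Writing $i := \sigma(\phi_{q-1})$ and $j := \sigma(\phi_q)$, with $i \neq j$ and $x(\phi_{q+1}) = A_j^{\tau_q} x(\phi_q)$, one has
\[
V(\phi_q) = x(\phi_q)^\prime P_i x(\phi_q), \qquad V(\phi_{q+1}) = x(\phi_q)^\prime A_j^{\prime\tau_q} P_j A_j^{\tau_q} x(\phi_q).
\]
The second LMI of Theorem~\ref{th:geromel2}, instantiated with the indices $(j,i)$ (admissible since $j \neq i$), directly gives $A_j^{\prime\tau} P_j A_j^\tau \prec P_i$, settling the case $\tau_q = \tau$. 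For $\tau_q > \tau$, I would iterate the first LMI to obtain $A_j^{\prime(\tau_q-\tau)} P_j A_j^{\tau_q-\tau} \preceq P_j$ and then sandwich this by $A_j^{\prime\tau}$ on the left and $A_j^\tau$ on the right to obtain
\[
A_j^{\prime\tau_q} P_j A_j^{\tau_q} \preceq A_j^{\prime\tau} P_j A_j^\tau \prec P_i,
\]
so that the comparison to $P_i$ remains strict. Hence $V(\phi_{q+1}) < V(\phi_q)$ in all admissible cases.

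Because there are only finitely many distinct mode pairs and every LMI is strict, I would then extract a uniform constant $\rho \in (0,1)$ with $V(\phi_{q+1}) \le \rho\, V(\phi_q)$, giving exponential decay of both $V(\phi_q)$ and $x(\phi_q)$ along the switching subsequence. To upgrade to asymptotic stability of the full trajectory, I would invoke the first LMI $A_j^\prime P_j A_j \prec P_j$, which on each dwell interval $[\phi_q,\phi_{q+1}]$ forces $x(t)^\prime P_j x(t)$ to be nonincreasing, so that $\|x(t)\|$ is dominated by a fixed multiple of $\|x(\phi_q)\|$ throughout the interval; asymptotic stability then follows.

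The main obstacle is the sandwich step: since $A_j$ is not assumed invertible, the iteration of the first LMI only produces a non-strict bound on $A_j^{\prime(\tau_q-\tau)} P_j A_j^{\tau_q-\tau}$, so strictness of the overall decrease of $V$ must be carried entirely by the $\tau$-step LMI applied to the pair $(j,i)$. Everything else parallels the standard dwell-time argument used for Theorem~\ref{th:geromel}.
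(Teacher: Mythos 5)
Your proof is correct and takes essentially the route the paper intends: the paper offers no explicit proof of Theorem~\ref{th:geromel2}, but attributes it to the backward-indexed Lyapunov function $V_b(x(t),\sigma(t-1))=x(t)^\prime P_{\sigma(t-1)}x(t)$, which is exactly the candidate you adopt, the rest being the standard dwell-time argument behind Theorem~\ref{th:geromel}. Your treatment of the non-strict iteration of the first LMI for $\tau_q>\tau$, with strictness carried by the $\tau$-step LMI applied to the pair $(j,i)$, and your extraction of a uniform contraction factor $\rho<1$ correctly fill in the details the paper leaves implicit.
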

The main difference between Theorem \ref{th:geromel} and Theorem \ref{th:geromel2} lies in the second LMI condition, where the matrices $P_i$ and $P_j$ have been swapped. Theorem \ref{th:geromel} indeed considers the Lyapunov function given by
\begin{equation}
  V_f(x(t),\sigma(t))=x(t)^\prime P_{\sigma(t)}x(t)
\end{equation}
whereas Theorem  \ref{th:geromel2} considers the Lyapunov function
\begin{equation}
  V_b(x(t),\sigma(t-1))=x(t)^\prime P_{\sigma(t-1)}x(t).
\end{equation}
The difference between these two Lyapunov functions is illustrated in Figure~\ref{fig:LF} where we can see that the trajectories slightly differ but are both monotonically decreasing.
\begin{figure}
  \centering
  \includegraphics[width=0.5\textwidth]{./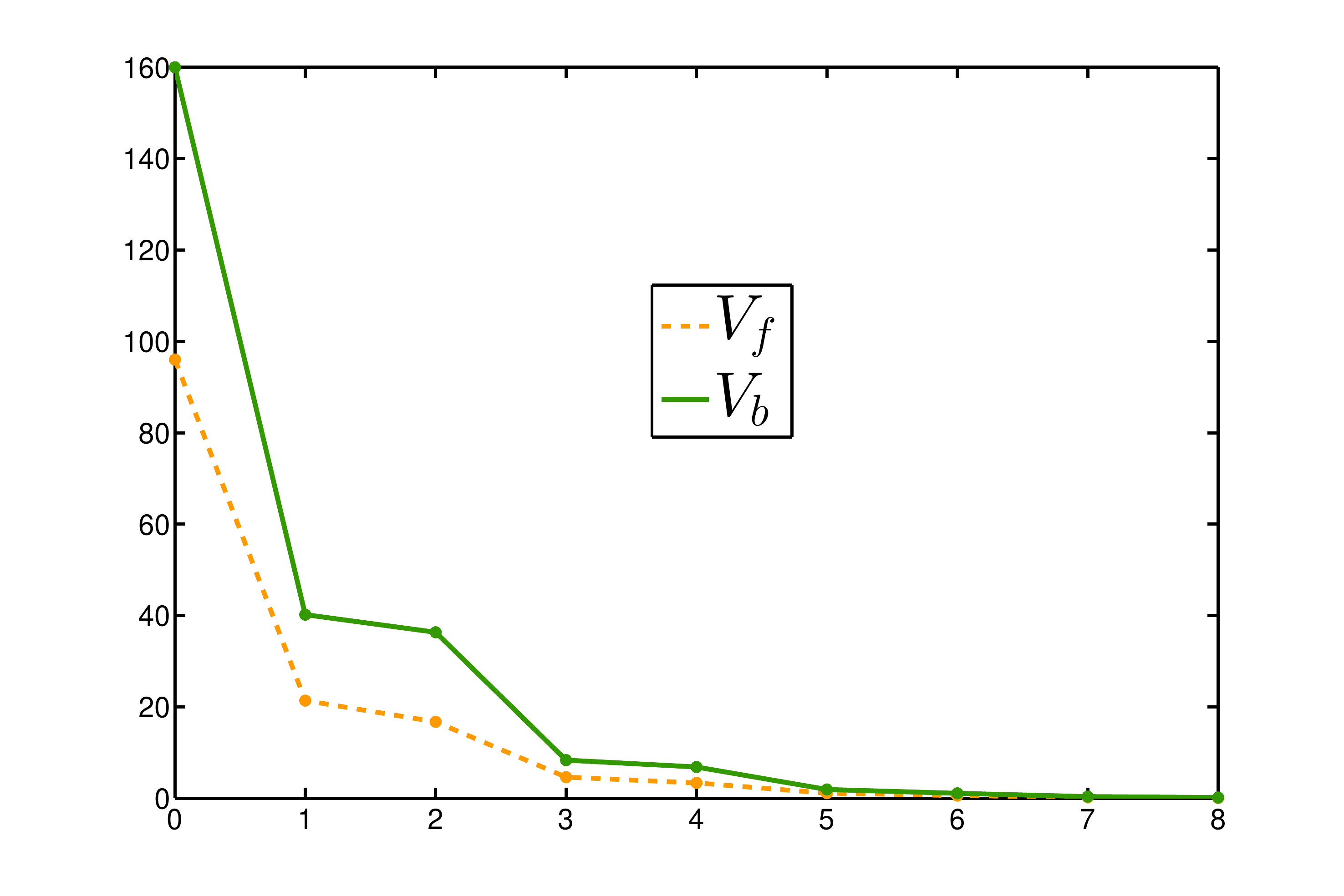}
  \caption{Evolution of the Lyapunov functions $V_f$ and $V_b$ along the trajectories of a discrete-time switched system of the form \eqref{eq:mainsyst}.}\label{fig:LF}
\end{figure}

%
%

\section{Convex conditions for minimum dwell-time analysis}\label{sec:affine}

The conditions stated in Theorem \ref{th:geromel} and Theorem \ref{th:geromel2} are nonconvex in the matrices of the system due to the presence of powers of these matrices. Equivalent convex conditions are proposed in this section.

\subsection{Main results}

The following result is the convex counterpart of Theorem \ref{th:geromel2}:
\begin{theorem}\label{th:affine}
 The following statements are equivalent:
 \begin{enumerate}
       %
   \item There exist matrices $P_i\in\mathbb{S}_{\succ0}^n$, $i=1,\ldots,N$ such that the LMIs
  \begin{equation}\label{eq:cond1}
    A_i^\prime P_iA_i-P_i\prec0
  \end{equation}
  and
  \begin{equation}\label{eq:cond2}
    A_i^{\prime\tau}P_i A_i^\tau-P_j\prec0
  \end{equation}
  hold for all $i,j=1,\ldots,N$, $i\ne j$.
   \item There exist matrix sequences $R_i:\{0,\ldots,\tau\}\to\mathbb{S}^n$, $R_i(0)\succ0$, $i=1,\ldots,N$, and a scalar $\eps>0$ such that the LMIs
   \begin{equation}\label{eq:cond1a}
    A_i^{\prime}R_i(\tau)A_i-R_i(\tau)\prec0
   \end{equation}
    \begin{equation}\label{eq:cond2a}
    A_i^\prime R_i(k+1)A_i-R_i(k)\preceq0
  \end{equation}
  and
  \begin{equation}\label{eq:cond2b}
    R_i(0)-R_j(\tau)+\eps I\preceq0
  \end{equation}
  hold for all $i,j=1,\ldots,N$, $i\ne j$ and $k=0,\ldots,\tau-1$.
     \item There exist matrix sequences $S_i:\{0,\ldots,\tau\}\to\mathbb{S}^n$, $S_i(\tau)\succ0$, $i=1,\ldots,N$, and a scalar $\eps>0$ such that the LMIs
   \begin{equation}\label{eq:cond1c}
    A_iS_i(\tau)A_i^{\prime}-S_i(\tau)\prec0
   \end{equation}
    \begin{equation}\label{eq:cond3a}
    A_iS_i(k)A_i^{\prime}-S_i(k+1)\preceq0
  \end{equation}
  and
  \begin{equation}\label{eq:cond3b}
    S_j(\tau)-S_i(0)+\eps I\preceq0
  \end{equation}
  hold for all $i,j=1,\ldots,N$, $i\ne j$ and $k=0,\ldots,\tau-1$.
   \end{enumerate}
   Moreover, when one of the above equivalent statements holds, then the switched system \eqref{eq:mainsyst} is asymptotically stable for all switching-time sequences $\{\phi_q\}_{q\in\mathbb{N}}$ satisfying $\tau_q\ge \tau$.
  \end{theorem}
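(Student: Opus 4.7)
The strategy is to prove the two equivalences $(1) \Leftrightarrow (2)$ and $(1) \Leftrightarrow (3)$ separately (from which $(2) \Leftrightarrow (3)$ follows by transitivity); since statement (1) coincides with the hypothesis of Theorem~\ref{th:geromel2}, the asymptotic stability conclusion then follows at once from that theorem as soon as any one of (2) or (3) is shown to imply (1).

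For $(1) \Rightarrow (2)$ I would take $R_i(k) := A_i^{\prime(\tau-k)} P_i A_i^{\tau-k}$ for $k=1,\ldots,\tau$ and $R_i(0) := A_i^{\prime\tau} P_i A_i^\tau + \delta I$ for some small $\delta > 0$; the boundary perturbation $\delta I$ is needed to guarantee $R_i(0) \succ 0$ even when $A_i$ is rank-deficient. A direct computation then shows that (2a) coincides with (1a), that (2b) reduces to $0 \preceq 0$ for $k \geq 1$ and to $-\delta I \preceq 0$ at $k=0$, and that (2c) holds for $\delta,\eps$ small enough thanks to the strict slack in (1b). For the converse $(2) \Rightarrow (1)$, I would set $P_i := R_i(\tau)$; positive definiteness follows from (2c) applied with some $j \neq i$, which gives $R_i(\tau) \succeq R_j(0) + \eps I \succ 0$. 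Iterating (2b) backwards in $k$ produces $A_i^{\prime\tau} R_i(\tau) A_i^\tau \preceq R_i(0)$, and combining with (2c) yields $A_i^{\prime\tau} P_i A_i^\tau - P_j \preceq -\eps I$, i.e.\ (1b); condition (1a) is (2a) verbatim.

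The equivalence $(1) \Leftrightarrow (3)$ would be handled in a dualized form, relying on the standard identity $AXA^\prime \prec Y \Leftrightarrow A^\prime Y^{-1} A \prec X^{-1}$ (for $X,Y \succ 0$) that freely swaps primal and dual discrete Lyapunov inequalities. For $(3) \Rightarrow (1)$ I would set $P_i := S_i(\tau)^{-1}$: (1a) is immediate from (3a) via that identity; iterating (3b) forward yields $A_i^\tau S_i(0) A_i^{\prime\tau} \preceq S_i(\tau)$, and, since (3c) gives $S_i(0) \succeq S_j(\tau) + \eps I \succ 0$, the same identity turns this bound into $A_i^{\prime\tau} P_i A_i^\tau \preceq S_i(0)^{-1} \prec S_j(\tau)^{-1} = P_j$, which is (1b). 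The forward direction $(1) \Rightarrow (3)$ is the main obstacle: setting $Q_i := P_i^{-1}$, the naive choice $S_i(k) := A_i^k Q_i A_i^{\prime k}$ makes (3b) an identity but destroys strictness in (3a) whenever $A_i$ is singular, because the relevant expression gets sandwiched by the possibly rank-deficient $A_i^\tau$ on both sides. I would repair this by adding a cumulative slack term,
\begin{equation*}
S_i(k) := A_i^k Q_i A_i^{\prime k} + c \sum_{j=0}^{k-1} A_i^j A_i^{\prime j},
\end{equation*}
for a small $c>0$: (3b) then telescopes exactly to $-cI \preceq 0$, the expression in (3a) becomes $A_i^\tau(A_iQ_iA_i^\prime - Q_i)A_i^{\prime\tau} + c(A_i^\tau A_i^{\prime\tau} - I)$, which is $\prec 0$ as soon as $c$ does not exceed the strict slack in the dualized (1a); the positivity $S_i(\tau) \succeq cI \succ 0$ is automatic (since the sum contains $A_i^0 A_i^{\prime 0} = I$), and (3c) is verified by choosing $\eps$ and $c$ small enough against the strict slack in (1b).
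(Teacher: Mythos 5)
Your treatment of the equivalence between statements (a) and (b) (your (1) and (2)) is correct and is essentially the paper's own argument: the paper proves (b) $\Rightarrow$ (a) by the same telescoping sum $\sum_{k=0}^{\tau-1}A_i^{\prime k}\left(A_i^\prime R_i(k+1)A_i-R_i(k)\right)A_i^k=A_i^{\prime\tau}R_i(\tau)A_i^\tau-R_i(0)\preceq0$ followed by \eqref{eq:cond2b}, and (a) $\Rightarrow$ (b) by running the recursion backwards from $R_i(\tau)=P_i$ with a small positive-semidefinite slack, which is exactly your explicit choice with the $\delta I$ perturbation. Your direction (3) $\Rightarrow$ (1) is also sound. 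The gap is entirely in (1) $\Rightarrow$ (3).

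Dualization swaps the roles of the two Lyapunov matrices in the cross-coupling LMI. By the very identity you invoke, \eqref{eq:cond2}, i.e. $A_i^{\prime\tau}P_iA_i^\tau\prec P_j$, is equivalent to $A_i^{\tau}Q_jA_i^{\prime\tau}\prec Q_i$ with $Q_i:=P_i^{-1}$: the sandwiched matrix $P_i$ reappears as the \emph{outer} bound $Q_i$, and the outer bound $P_j$ reappears \emph{inside}. Your candidate $S_i(k)=A_i^kQ_iA_i^{\prime k}+c\sum_{l=0}^{k-1}A_i^lA_i^{\prime l}$ has $S_i(0)=Q_i$ and $S_j(\tau)\succeq A_j^\tau Q_jA_j^{\prime\tau}$, so \eqref{eq:cond3b} forces $A_j^\tau Q_jA_j^{\prime\tau}\prec Q_i$, which dualizes back to $A_j^{\prime\tau}P_iA_j^\tau\prec P_j$; after relabeling this is condition \eqref{eq:geromel2} of Theorem \ref{th:geromel} (the Geromel--Colaneri form, with $P_j$ inside), \emph{not} condition \eqref{eq:cond2} of statement (a). A particular solution of statement (a) need not satisfy it. Concretely, take $N=2$, $\tau=1$,
\begin{equation*}
A_1=\begin{bmatrix}0&2\\0&0\end{bmatrix},\quad A_2=0,\quad P_1=\diag(1,5),\quad P_2=\diag(10,5);
\end{equation*}
all four LMIs of statement (a) hold, yet $S_1(1)-S_2(0)+\eps I=A_1Q_1A_1^\prime+(c+\eps)I-Q_2$ has $(1,1)$ entry $0.8+c+\eps-0.1>0$, so \eqref{eq:cond3b} fails for every $c,\eps>0$. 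The repair is to pass through statement (b) rather than dualize (a) with forward powers: take a (b)-certificate that is positive definite at \emph{every} $k$, e.g. $R_i(k)=A_i^{\prime(\tau-k)}P_iA_i^{\tau-k}+\delta\sum_{\iota=0}^{\tau-k-1}A_i^{\prime\iota}A_i^{\iota}$ with $\delta>0$ small (so $R_i(k)\succeq\delta I$ for $k<\tau$ and $R_i(\tau)=P_i$), and set $S_i(k):=R_i(k)^{-1}$. Your inversion identity then maps \eqref{eq:cond1a}, \eqref{eq:cond2a}, \eqref{eq:cond2b} exactly onto \eqref{eq:cond1c}, \eqref{eq:cond3a}, \eqref{eq:cond3b}, the last one by strict monotonicity of matrix inversion. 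In other words, statement (c) is the \emph{inverse} of statement (b) -- morally built from $A_i^{-(\tau-k)}Q_i\bigl(A_i^{-(\tau-k)}\bigr)^{\prime}$, i.e. inverse powers, not forward powers -- and this is precisely what the paper's one-line proof that (b) $\Leftrightarrow$ (c) ``follows from Schur complements'' amounts to.
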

  \begin{proof}
  %
    \textbf{Proof of (b) $\Rightarrow$ (a):} We have to prove here that conditions \eqref{eq:cond2a} and \eqref{eq:cond2b} together imply that condition \eqref{eq:cond2} holds. Let us denote $L_i(k):=A_i^\prime R_i(k+1)A_i-R_i(k)\preceq0$ and consider the sum
    \begin{equation}
      \sum_{k=0}^{\tau-1} A_i^{\prime k}L_i(k)A_i^k=A_i^{\prime\tau}R_i(\tau) A_i^\tau-R_i(0)\preceq0.
    \end{equation}
    Using then condition \eqref{eq:cond2b}, we get that
    \begin{equation}
    A_i^{\prime\tau}R_i(\tau)A_i^\tau-R_j(\tau)+\eps I\preceq0
  \end{equation}
  which implies in turn that \eqref{eq:cond2} holds with $P_i=R_i(\tau)$.

  \textbf{Proof of (a) $\Rightarrow$ (b):} To prove this, we first show that \eqref{eq:cond2a} always has solutions, regardless of the stability of the system. Then, we combine the solution to \eqref{eq:cond2} to show that \eqref{eq:cond2b} holds.

  Solving then for
  \begin{equation}\label{eq:glabglab}
    A_i^\prime R_i(k+1)A_i-R_i(k)=-W_i(k)
  \end{equation}
  for some $W_i(k)\succeq0$,  we get that
  \begin{equation}\label{eq:rik}
    R_i(k)=A_i^{\prime (\tau-k)}R_i(\tau)A_i^{\tau-k}-\bar{W}_i(k)
  \end{equation}
  where $\textstyle\bar{W}_i(k):=\sum_{\iota=k}^{\tau-1}A_i^{(\iota-k)\prime}W_i(\iota)A_i^{\iota-k}$ and hence
  \begin{equation}
    R_i(0)=A_i^{\prime \tau}R_i(\tau)A_i^{\tau}-\bar{W}_i(0).
  \end{equation}
  Substitute now $R_i(0)$ in \eqref{eq:cond2} with $P_i=R_i(\tau)$ to get
  \begin{equation}
     R_i(0)-R_j(\tau)\prec-\bar{W}_i(0)
  \end{equation}
  which is equivalent to \eqref{eq:cond2b} since $\bar{W}_i(0)\succeq0$. The proof is complete.

  \textbf{Proof of (b) $\Leftrightarrow$ (c):} The proof follows from Schur complements.
  \end{proof}

The rationale for developing the conditions of statements (b) and (c) is to get rid of powers of the matrices of the system, i.e. $A^\tau$, that are responsible for the lack of convexity of the conditions. Unlike the conditions of statement (a), the conditions in statements (b) and (c) are convex in the matrices of the system as shown below:
\begin{proposition}\label{prop:convex}
  The LMIs \eqref{eq:cond1a}-\eqref{eq:cond2a} are convex in the $A_i$'s.
\end{proposition}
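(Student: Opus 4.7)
The plan is to exhibit each of the two LMIs as a sublevel set of a matrix-convex function of $A_i$, using the classical fact that $A\mapsto A^\prime PA$ is matrix-convex as soon as $P\succeq 0$. Concretely, for $\lambda\in[0,1]$ the identity
\begin{equation*}
\lambda A_1^\prime P A_1+(1-\lambda)A_2^\prime P A_2-(\lambda A_1+(1-\lambda)A_2)^\prime P(\lambda A_1+(1-\lambda)A_2)=\lambda(1-\lambda)(A_1-A_2)^\prime P(A_1-A_2)
\end{equation*}
yields matrix convexity whenever $P\succeq 0$, and hence convexity of the sublevel set $\{A : A^\prime PA\preceq Q\}$ for any fixed $Q$. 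With this preliminary tool in hand, the only real task is to check that the ``weight'' sandwiching $A_i$ on either side in \eqref{eq:cond1a} and \eqref{eq:cond2a} is actually positive semidefinite; this is the main substantive step, since statement (b) imposes positive definiteness only on $R_i(0)$.

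First, I would deduce from \eqref{eq:cond2b} that every $R_i(\tau)$ is in fact positive definite. Indeed, for any $j\ne i$ the inequality rearranges to $R_j(\tau)\succeq R_i(0)+\eps I\succ 0$, because the $R_i(0)$ are positive definite by assumption. Then a straightforward backward induction on $k$ would propagate positive semidefiniteness to the entire sequence: assuming $R_i(k+1)\succeq 0$, the inequality \eqref{eq:cond2a} gives $R_i(k)\succeq A_i^\prime R_i(k+1)A_i\succeq 0$. Starting from $R_i(\tau)\succ 0$ and running the induction down to $k=0$ yields $R_i(k)\succeq 0$ for every $i$ and every $k\in\{0,\ldots,\tau\}$.

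Once positive semidefiniteness of all weighting matrices is secured, I would finish by applying the preliminary tool to each LMI individually. Condition \eqref{eq:cond1a} has the form $A_i^\prime PA_i\preceq Q$ with $P=Q=R_i(\tau)\succ 0$, while condition \eqref{eq:cond2a} has the same form with $P=R_i(k+1)\succeq 0$ and $Q=R_i(k)$; in both cases $P$ is positive semidefinite, so the set of $A_i$ satisfying the constraint is convex. The anticipated obstacle is precisely the backward-induction step above: matrix convexity of $A\mapsto A^\prime PA$ would fail if $P$ were indefinite, which is why one is forced to extract positive semidefiniteness of the $R_i(k)$ from the inequalities themselves rather than posit it as a hypothesis.
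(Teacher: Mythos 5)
Your proof is correct, and its overall strategy---reduce convexity of the LMIs to a sign condition on the weighting matrices $R_i(k)$ sandwiched between $A_i^\prime$ and $A_i$, then extract that sign condition from \eqref{eq:cond2b} and \eqref{eq:cond2a}---is the same as the paper's. The difference lies in how the intermediate matrices are handled, and there your route is cleaner. The paper insists on proving $R_i(k)\succ0$ for all $k$: after obtaining $R_i(\tau)\succ0$ from \eqref{eq:cond2b} exactly as you do, it invokes the explicit solution formula \eqref{eq:rik} together with \eqref{eq:cond1a} to claim that $k\mapsto R_i(k)$ is monotone and bounded below by $R_i(0)\succ0$. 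That step is fragile: it is stated as ``decreasing as $k$ increases'' yet ``lower bounded by $R_i(0)$'', which is inconsistent on its face, and strict positive definiteness of the intermediate $R_i(k)$ can in fact fail when $A_i$ is singular (take $A_i=0$: then \eqref{eq:cond2a} only forces $R_i(k)\succeq0$ for $0<k<\tau$). Your backward induction $R_i(k)\succeq A_i^\prime R_i(k+1)A_i\succeq0$ sidesteps this entirely by settling for positive semidefiniteness, which---as your identity $\lambda A_1^\prime PA_1+(1-\lambda)A_2^\prime PA_2-(\lambda A_1+(1-\lambda)A_2)^\prime P(\lambda A_1+(1-\lambda)A_2)=\lambda(1-\lambda)(A_1-A_2)^\prime P(A_1-A_2)\succeq0$ makes explicit---is all that matrix convexity of the sublevel sets requires. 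So your argument establishes the proposition via a weaker (and always valid) intermediate claim, and it also supplies the convexity identity that the paper leaves implicit.
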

\begin{proof}
  To prove this, it is enough to prove that the matrices $R_i(k)$ are positive definite. By assumption, $R_i(0)$ is positive definite, hence the LMI \eqref{eq:cond1a} is convex in the system matrices. To see that all the $R_i$'s are positive definite, let us first notice that if the LMIs $R_i(0)\succ0$ and \eqref{eq:cond2b} hold, then we have that $R_i(\tau)\succeq R_i(0)+\eps I\succ R_i(0)$ and hence $R_i(\tau)$ is positive definite as well. Using now the equality \eqref{eq:rik}, and using the fact that $\bar{W}_i(k)\succeq0$ and that $A_i^{\prime \tau}R_i(\tau)A_i^{\tau}\prec R_i(\tau)$ from \eqref{eq:cond1a}, we get that $R_i(k)$ is decreasing as $k$ increases. We proved above that this sequence is lower bounded by $R_i(0)\succ0$, therefore $R_i(k)\succ0$ for all $k=0,\ldots,\tau-1$. The proof is complete.
\end{proof}

\subsection{Discussion}

The price to pay for this convexity, however, is the increase of the computational complexity. As shown in Table \ref{tab:comp}, the computational complexity of the lifted-conditions is higher, and affine in the dwell-time value $\tau$. This means that the increase of the computational complexity will be reasonable whenever the minimum dwell-time is small (and when the product $Nn$ is not too large). This is a quite convenient property since in most of the applications the dwell-time is aimed to be minimized.

Note, moreover, that the computational complexity is intrinsically low since the number of decision matrices is small.  As a comparison, the LMI \eqref{eq:geromel2} could be converted into an affine form using the Finsler's lemma \cite{SkeltonIG:97a} by introducing a large number of slack-variables which would introduce extra computational complexity both in the LMI size (proportional to $\tau$) and the number of variables (proportional to $N^2\tau^2$). In this respect, the proposed approach is more suitable since more tractable. Note also that, on the top of this, the Finsler's lemma yields LMI conditions that are difficult to turn into convex design conditions due to the excessive amount of slack variables that are introduced in the process.

It is finally important to stress that the computational complexity of the method is also reduced by the fact that we only impose $R_i(0)$ to be positive definite. As shown in Proposition \ref{prop:convex}, there is indeed no need to impose that $R_i(k)\succ0$ for all $k=0,\ldots,\tau-1$.

\begin{table*}
\centering
\caption{Computational complexity of the conditions of Theorem \ref{th:affine}.}\label{tab:comp}
  {\tabulinesep=1.2mm
  \begin{tabu}{|c|c|c|}
  \hline
     & no. variables & LMI size\\
     \hline
     \hline
     Th. \ref{th:affine}, (a) & $\dfrac{Nn(n+1)}{2}$ & $N(N+1)n$\\
     \hline
     Th. \ref{th:affine}, (b) & $\dfrac{N(\tau+1)n(n+1)}{2}+1$ & $(N^2+N+N\tau)n+1$\\
     \hline
  \end{tabu}}
\end{table*}

\subsection{Examples}

Several examples are addressed in what follows. The LMI parser Yalmip \citep{YALMIP} and the semidefinite programming solver SeDuMi \citep{Sturm:01a} are considered.

\begin{example}\label{ex:1}
  Let us consider the system \eqref{eq:mainsyst} with matrices $A_i=e^{B_iT}$ where  \citep{Geromel:06d}
  \begin{equation}
    B_1=\begin{bmatrix}
      0 & 1\\
      -10 & -1
    \end{bmatrix}\ \textnormal{and}\   B_2=\begin{bmatrix}
      0 & 1\\
      -0.1 & -0.5
    \end{bmatrix}.
  \end{equation}
  We set $T=0.5$ as in \citep{Geromel:06d} and use the conditions of Theorem \ref{th:affine}, statement (b). We obtain the upper-bound on the minimum dwell-time given by $\tau^*=6$. The same value is obtained using the statement (a), which is expected since the methods are equivalent. As shown in  \citep{Geromel:06d}, this bound moreover coincides with the actual minimum dwell-time since the spectrum of $A_1^5A_2^5$ contains one eigenvalue outside the unit disc. 
\end{example}

\begin{example}
   Let us consider the system \eqref{eq:mainsyst} with matrices
  \begin{equation}
    \begin{array}{lcl}
      A_1&=&\begin{bmatrix}
        -1.3&  -0.8&   -1.5&    2.1\\
   -1.5 & -0.4 & -1.5& -0.2\\
    1.6&    0.6   & 1.8  & -2.2\\
    0.5&    0.3  &  0.5  & 0.9
      \end{bmatrix},\\
      A_2&=&\begin{bmatrix}
        -0.4&   -0.7&    0.3&    0.2\\
        -0.4&   -0.4&   -0.2&   -0.3\\
         0.6&    0.4&    0.1&    0.4\\
         0.5&    0.6&         0 &        0
      \end{bmatrix}.
    \end{array}
  \end{equation}
  Applying statement (b) of Theorem \ref{th:affine}, we find that an upper-bound on the minimal dwell-time is $\tau^*=4$. As in the previous example, this bound is the actual minimum dwell-time since one eigenvalue of the product $A_1^3A_2^3$ lies outside the unit disc.
\end{example}

\begin{example}
   Let us consider the system \eqref{eq:mainsyst} with matrices
  \begin{equation}
    \begin{array}{lclclcl}
      A_1&=&\begin{bmatrix}
        1.297  &  0.35\\
         -2.229  & -1.297
      \end{bmatrix},&&\   A_2&=&\begin{bmatrix}
      1.082&    2.67\\
   -0.079&   -1.082
      \end{bmatrix}.
    \end{array}
  \end{equation}
  These matrices have eigenvalues very close to the unit circle. It is therefore expected to have a large minimum dwell-time.  Applying statement (b) of Theorem \ref{th:affine}, we find that the upper-bound on the minimal dwell-time is $\tau^*=16$. As in the previous example, this bound is the actual minimum dwell-time since one eigenvalue of the product $A_1^{15}A_2^{15}$ lies outside the unit disc.
\end{example}

\section{Convex conditions for robust minimum dwell-time analysis}\label{sec:rob}

Let us consider now that the matrices of the system~\eqref{eq:mainsyst} are uncertain, possibly time-varying, and belonging to the following polytopes
\begin{equation}\label{eq:uncmat}
  A_i\in\mathcal{A}_i:=\co\left\{A_{i,1},\ldots,A_{i,\eta}\right\},
\end{equation}
where $\co\{\cdot\}$ is the convex-hull operator and $\eta\in\mathbb{N}$ is the number of vertices of the polytope. Let us, moreover, define the set
\begin{equation}
  \boldsymbol{\Pi}_i^\tau:=\left\{\prod_{k=1}^\tau M_k:\ M_k\in\mathcal{A}_i\right\}
\end{equation}
which contain all the possible products of $\tau$ matrices drawn from the polytope $\mathcal{A}_i$.

The following result is the robustification of Theorem \ref{th:affine}:
\begin{theorem}\label{th:affine_rob}
 The following statements are equivalent:
 \begin{enumerate}
       %
   \item There exist matrices $P_i\in\mathbb{S}_{\succ0}^n$, $i=1,\ldots,N$ such that the LMIs
  \begin{equation}\label{eq:cond1_rob}
    A_i^\prime P_iA_i-P_i\prec0
  \end{equation}
  and
  \begin{equation}\label{eq:cond2_rob}
    \Pi_i^{\prime}P_i \Pi_i-P_j\prec0
  \end{equation}
  hold for all $i,j=1,\ldots,N$, $i\ne j$, all $A_i\in\mathcal{A}_i$ and all $\Pi_i\in\boldsymbol{\Pi}_i^\tau$.
   \item There exist matrix sequences $R_i:\{0,\ldots,\tau\}\to\mathbb{S}^n$, $R_i(0)\succ0$, $i=1,\ldots,N$, and a scalar $\eps>0$ such that the LMIs
   \begin{equation}\label{eq:cond1a_rob}
    A_{i,\kappa}^{\prime}R_i(\tau)A_{i,\kappa}-R_i(\tau)\prec0
   \end{equation}
    \begin{equation}\label{eq:cond2a_rob}
    A_{i,\kappa}^\prime R_i(k+1)A_{i,\kappa}-R_i(k)\preceq0
  \end{equation}
  and
  \begin{equation}\label{eq:cond2b_rob}
    R_i(0)-R_j(\tau)+\eps I\preceq0
  \end{equation}
  hold for all $i,j=1,\ldots,N$, $i\ne j$, $k=0,\ldots,\tau-1$ and $\kappa=1,\ldots,\eta$.
     \item There exist matrix sequences $S_i:\{0,\ldots,\tau\}\to\mathbb{S}^n$, $S_i(\tau)\succ0$, $i=1,\ldots,N$, and a scalar $\eps>0$ such that the LMIs
   \begin{equation}\label{eq:cond1c_rob}
    A_{i,\kappa}S_i(\tau)A_{i,\kappa}^{\prime}-S_i(\tau)\prec0
   \end{equation}
    \begin{equation}\label{eq:cond3a_rob}
    A_{i,\kappa}S_i(k)A_{i,\kappa}^{\prime}-S_i(k+1)\preceq0
  \end{equation}
  and
  \begin{equation}\label{eq:cond3b_rob}
    S_j(\tau)-S_i(0)+\eps I\preceq0
  \end{equation}
  hold for all $i,j=1,\ldots,N$, $i\ne j$, $k=0,\ldots,\tau-1$ and $\kappa=1,\ldots,\eta$.
   \end{enumerate}

      Moreover, when one of the above equivalent statements holds, then the uncertain switched system \eqref{eq:mainsyst}-\eqref{eq:uncmat} is asymptotically stable for all switching-time sequences $\{\phi_q\}_{q\in\mathbb{N}}$ satisfying $\tau_q\ge \tau$.
  \end{theorem}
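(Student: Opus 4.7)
The plan is to extend the three-part proof of Theorem \ref{th:affine} to the polytopic case, inserting one ingredient that passes between vertex conditions (in (b), (c)) and full-polytope conditions (in (a)). The key algebraic fact is matrix-convexity of $A\mapsto A^\prime R A$ in $A$ when $R\succeq 0$, following from the identity
\begin{equation*}
\alpha A_1^\prime R A_1+(1-\alpha)A_2^\prime R A_2-(\alpha A_1+(1-\alpha)A_2)^\prime R(\alpha A_1+(1-\alpha)A_2)=\alpha(1-\alpha)(A_1-A_2)^\prime R(A_1-A_2)\succeq 0.
\end{equation*}
Combined with the positive-definiteness argument of Proposition \ref{prop:convex} applied vertex-wise, this shows that imposing \eqref{eq:cond1a_rob}--\eqref{eq:cond2a_rob} at the vertices $A_{i,\kappa}$ is equivalent to imposing them for every $A_i\in\mathcal{A}_i$.

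For (b)$\Rightarrow$(a), fix any $\Pi_i\in\boldsymbol{\Pi}_i^\tau$, written (up to relabeling of factors) as $\Pi_i=M_\tau M_{\tau-1}\cdots M_1$ with $M_k\in\mathcal{A}_i$. With $\Psi_0:=I$ and $\Psi_k:=M_k\Psi_{k-1}$, pre-/post-multiplying the extended inequality $M_{k+1}^\prime R_i(k+1) M_{k+1}\preceq R_i(k)$ by $\Psi_k^\prime$ and $\Psi_k$ produces $\Psi_{k+1}^\prime R_i(k+1)\Psi_{k+1}\preceq \Psi_k^\prime R_i(k)\Psi_k$. Telescoping from $k=0$ to $\tau-1$ yields $\Pi_i^\prime R_i(\tau)\Pi_i\preceq R_i(0)$, and combining with \eqref{eq:cond2b_rob} gives \eqref{eq:cond2_rob} with $P_i:=R_i(\tau)$; \eqref{eq:cond1_rob} follows directly from \eqref{eq:cond1a_rob} via the same convexity. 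The equivalence (b)$\Leftrightarrow$(c) is a purely algebraic Schur-complement manipulation via the substitution $S_i(k):=R_i(k)^{-1}$ and carries over from Theorem \ref{th:affine} unchanged.

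For (a)$\Rightarrow$(b), the natural approach is to set $R_i(\tau):=P_i$ and build the sequence $R_i(\tau-1),\ldots,R_i(0)$ backwards, requiring $A_{i,\kappa}^\prime R_i(k+1) A_{i,\kappa}\preceq R_i(k)$ for every $\kappa$ and $R_i(0)\prec R_j(\tau)$ with margin $\eps>0$. Compactness of each $\mathcal{A}_i$ together with the strict inequality in \eqref{eq:cond2_rob} yields a uniform margin $\delta>0$ such that $\Pi^\prime P_i\Pi\preceq P_j-\delta I$ for all $\Pi\in\boldsymbol{\Pi}_i^\tau$ and $i\neq j$; this margin must then be distributed across the $\tau$ recursion steps.

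I expect this last direction to be the main obstacle. The L\"owner supremum of a compact family of positive semidefinite matrices need not exist, so $R_i(k)$ cannot simply be defined as the pointwise supremum over all partial products $M_{k+1}^\prime\cdots M_\tau^\prime P_i M_\tau\cdots M_{k+1}$. A naive choice $R_i(k):=\sum_\kappa A_{i,\kappa}^\prime R_i(k+1) A_{i,\kappa}$ satisfies every vertex inequality but inflates $R_i(0)$ by a factor of order $\eta^\tau$, destroying the bound against $P_j$. A viable strategy is to construct a tighter recursion that consumes only a bounded fraction of $\delta$ per step, or equivalently to pose (a)$\Rightarrow$(b) as a convex feasibility problem and invoke a separation-type argument exploiting the strict feasibility of (a).
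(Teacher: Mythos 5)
Your decomposition into (b)$\Rightarrow$(a), (b)$\Leftrightarrow$(c), and (a)$\Rightarrow$(b) is the right one, and the two directions you actually complete are correct and match what the paper intends: the paper's entire proof of this theorem is the single sentence ``the proof of the results follows from simple convexity arguments,'' which can only refer to exactly what you wrote out --- matrix convexity of $A\mapsto A'RA$ for $R\succeq0$ (your identity is correct), positive semidefiniteness of the $R_i(k)$'s via a vertex-wise version of Proposition \ref{prop:convex}, and the telescoping of $M_{k+1}'R_i(k+1)M_{k+1}\preceq R_i(k)$ along an arbitrary product $\Pi_i=M_\tau\cdots M_1$ to obtain \eqref{eq:cond2_rob} with $P_i=R_i(\tau)$, plus Schur complements for (b)$\Leftrightarrow$(c). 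On these directions your write-up is in fact more complete than the paper's own proof.

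The gap you flag in (a)$\Rightarrow$(b) is genuine, and you should know that the paper does not close it either: ``simple convexity arguments'' gives no mechanism for this direction. In the nominal Theorem \ref{th:affine}, (a)$\Rightarrow$(b) is proved by the explicit backward construction $R_i(k)=A_i^{\prime(\tau-k)}P_iA_i^{\tau-k}-\bar{W}_i(k)$, which iterates the single matrix $A_i$; under time-varying polytopic uncertainty there is no single matrix to iterate, and your diagnosis of the obstruction is exactly right. Statement (a) only guarantees that each individual product $\Pi_i'P_i\Pi_i$ lies below $P_j$, whereas statement (b) forces each intermediate $R_i(k)$ to be a common L\"owner upper bound of the whole family of partial products, and such bounds interact badly with the terminal constraint $R_i(0)\prec R_j(\tau)$: the L\"owner order is not a lattice, vertex sums inflate by a factor of order $\eta^{\tau}$, and the uniform-contraction chain $R_i(k)=\rho^{\tau-k}P_i$ (with $A'P_iA\preceq\rho P_i$ on $\mathcal{A}_i$) produces an upper bound $\rho^{\tau}P_i$ of all products that need not sit below $P_j$. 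Your suggested repair via a separation argument is also unlikely to be ``simple'': an LMI-duality certificate of infeasibility of (b) involves vertex mixtures of the form $\sum_\kappa A_{i,\kappa}Z_\kappa A_{i,\kappa}'$ rather than products, and converting such a certificate into a violation of (a) requires a rank-one extraction that fails in general. So the honest status is: you (and the paper) have established (b)$\Leftrightarrow$(c), (b)$\Rightarrow$(a), and that either implies robust stability under the dwell-time constraint; the converse implication (a)$\Rightarrow$(b), and hence the claimed three-way equivalence, is not proved by your argument, is not proved by the paper, and for $\tau\ge2$ is not obviously true at all.
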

  \begin{proof}
    The proof of the results follows from simple convexity arguments.
  \end{proof}
%

\begin{example}
  Let us consider the uncertain system \eqref{eq:mainsyst}-\eqref{eq:uncmat} with polytopes
\begin{equation}
  \begin{array}{lcl}
    \mathcal{A}_1&:=&\left\{\begin{bmatrix}
             0.77&   0.88\\
            -0.58  & -0.90
    \end{bmatrix},\ \begin{bmatrix}
       0.91 &  2.23\\
      -0.01 & -0.46
    \end{bmatrix}\right\}\\
    \mathcal{A}_2&:=&\left\{\begin{bmatrix}
           0.24  & 4.42\\
           -0.10&  -1.21
    \end{bmatrix},\ \begin{bmatrix}
      0.52 &   0.49\\
   -0.08 &  -0.19
    \end{bmatrix}\right\}.
  \end{array}
  \end{equation}
 Using Theorem \ref{th:affine_rob}, we find the upper-bound on the minimum dwell-time $\tau^*=3$. It can easily be seen that this bound is nonconservative since the product $A_1(\lambda_1)A_1(\lambda_2)A_2(\lambda_3)^2$ with $A_i(\lambda)=\lambda A_{i,1}+(1-\lambda)A_{i,2}$, $\lambda_1=0.9$, $\lambda_2=0$ and $\lambda_3=1$ has an eigenvalue outside the unit disc and is therefore unstable.
\end{example}

\section{Stabilization with minimum dwell-time}\label{sec:stabz}

It is now shown that the current framework can be efficiently and accurately used for control design. To this aim, let us consider the switched system with input:
\begin{equation}\label{eq:impsystc}
    x(t+1)=A_{\sigma(t)}x(t)+B_{\sigma(t)}u_{\sigma(t)}(t)
\end{equation}
where $u_i\in\mathbb{R}^{m_i}$, $i=1,\ldots,N$ are the control input vectors with possible different dimensions.

%

We consider the following class of state-feedback control laws
\begin{equation}\label{eq:impsf}
  u_{\sigma(\phi_q)}(\phi_q^k)=\left\{\begin{array}{l}
    K_{\sigma(\phi_q)}(k)x(\phi_q^k), k\in\{0,\ldots,\tau-1\}\\
    K_{\sigma(\phi_q)}(\tau)x(\phi_q^k), k\in\{\tau,\ldots,\tau_q-1\}
  \end{array}\right.
\end{equation}
where $\phi_q^k:=\phi_q+k$. Note that when $\tau_q=\tau$, the second part of the controller is not involved.
%
%
%
%
%
%

The purpose of this section is therefore to provide tractable conditions for finding suitable matrix sequences $K_i:\{0,\ldots,\tau\}\to\mathbb{R}^{m_i\times n}$ such that the closed-loop system \eqref{eq:impsystc}-\eqref{eq:impsf} is asymptotically stable.
\begin{theorem}\label{th:stabz}
 The following statements are equivalent:
 \begin{enumerate}
       %
   \item There exist matrices $P_i\in\mathbb{S}_{\succ0}^n$ and matrix sequences $K_i:\{0,\ldots,\tau\}\to\mathbb{R}^{m_i\times n}$, $i=1,\ldots,N$, such that the LMIs
  \begin{equation}\label{eq:cond1_stabz}
    (A_i+B_iK_i(\tau))^\prime P_i(A_i+B_iK_i(\tau))-P_i\prec0
  \end{equation}
  and
  \begin{equation}\label{eq:cond2_stabz}
    \Psi_i(\tau)^{\prime}P_i \Psi_i(\tau)-P_j\prec0
  \end{equation}
  hold for all $i,j=1,\ldots,N$, $i\ne j$, where
  \begin{equation}
    \Psi_i(\tau)=\prod_{k=0}^{\tau-1} (A_i+B_iK_i(k)).
  \end{equation}
     \item There exist matrix sequences $S_i:\{0,\ldots,\tau\}\to\mathbb{S}^n$, $S_i(\tau)\succ0$, $U_i:\{0,\ldots,\tau\}\to\mathbb{R}^{m_i\times n}$, $i=1,\ldots,N$, and a scalar $\eps>0$ such that the LMIs
   \begin{equation}\label{eq:cond1c_stabz}
   \begin{bmatrix}
     -S_i(\tau) & A_iS_i(\tau)+B_iU_i(\tau)\\
     \star & -S_i(\tau)
   \end{bmatrix}\prec0
   \end{equation}
    \begin{equation}\label{eq:cond3a_stabz}
    \begin{bmatrix}
      -S_i(k+1) & A_iS_i(k)+B_iU_i(k)\\
     \star & -S_i(k)
    \end{bmatrix}\preceq0
  \end{equation}
  and
  \begin{equation}\label{eq:cond3b_stabz}
    S_j(\tau)-S_i(0)+\eps I\preceq0
  \end{equation}
  hold for all $i,j=1,\ldots,N$, $i\ne j$ and $k=0,\ldots,\tau-1$.
%
   \end{enumerate}

    Moreover, when one of the above equivalent statements holds, then the closed-loop switched system \eqref{eq:impsystc}-\eqref{eq:impsf} is asymptotically stable for all switching-time sequences $\{\phi_q\}_{q\in\mathbb{N}}$ satisfying $\tau_q\ge \tau$ with the controller gains
    \begin{equation}
        K_i(k)=U_i(k)S_i(k)^{-1}.
    \end{equation}
  \end{theorem}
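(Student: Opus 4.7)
The plan is to treat Theorem \ref{th:stabz} as the stabilization analogue of Theorem \ref{th:affine} applied to the closed-loop matrices $\tilde{A}_i(k) := A_i + B_i K_i(k)$, and to linearize the resulting bilinear products via the classical state-feedback change of variable $U_i(k) := K_i(k) S_i(k)$. Statement (a) is then a direct transcription of Theorem \ref{th:affine}(a) applied to the closed-loop system, with the power $A_i^\tau$ replaced by the ordered product $\Psi_i(\tau) = \prod_{k=0}^{\tau-1}\tilde{A}_i(k)$ and the matrix $A_i$ in \eqref{eq:cond1} replaced by $\tilde{A}_i(\tau)$. Statement (b) is the counterpart of Theorem \ref{th:affine}(c) after this change of variable and a Schur complementation.

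For (b) $\Rightarrow$ (a), I would first check that every $S_i(k)$ is positive definite so that the recovery $K_i(k) = U_i(k) S_i(k)^{-1}$ is well-defined: $S_i(\tau)\succ 0$ is built in, \eqref{eq:cond3b_stabz} forces $S_i(0) \succeq S_j(\tau) + \eps I \succ 0$, and the telescoping inequality \eqref{eq:cond3a_stabz} then propagates positivity to intermediate indices exactly as in Proposition \ref{prop:convex}. A Schur complement on \eqref{eq:cond3a_stabz}, combined with the substitution $U_i(k) = K_i(k) S_i(k)$, gives $\tilde{A}_i(k) S_i(k) \tilde{A}_i(k)^\prime \preceq S_i(k+1)$; iterating over $k = 0, \ldots, \tau-1$ telescopes to $\Psi_i(\tau) S_i(0) \Psi_i(\tau)^\prime \preceq S_i(\tau)$, which combined with \eqref{eq:cond3b_stabz} yields $\Psi_i(\tau) S_j(\tau) \Psi_i(\tau)^\prime \prec S_i(\tau)$. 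Inverting this Stein-type inequality (as in the (b)$\Leftrightarrow$(c) step of Theorem \ref{th:affine}) produces \eqref{eq:cond2_stabz} with $P_i := S_i(\tau)^{-1}$. The same Schur/inversion argument applied to \eqref{eq:cond1c_stabz} yields \eqref{eq:cond1_stabz}.

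For (a) $\Rightarrow$ (b), I would proceed constructively along the lines of (a)$\Rightarrow$(b) in Theorem \ref{th:affine}: set $S_i(\tau) := P_i^{-1}$, choose $S_i(0)$ large enough that \eqref{eq:cond3b_stabz} holds, and propagate the closed-loop Lyapunov recursion with strictly positive slacks $W_i(k)$ so that the telescoped endpoints agree; this is possible because the strict inequality \eqref{eq:cond2_stabz} leaves room to absorb the slacks. Setting $U_i(k) := K_i(k) S_i(k)$ and applying Schur complements in reverse then recovers the LMIs of (b). The asymptotic stability conclusion follows immediately from (a): it is exactly the Lyapunov-decrease condition of Theorem \ref{th:geromel2} applied to the closed-loop transition operators $\tilde{A}_i(\tau)$ and $\Psi_i(\tau)$, interpreted through the Lyapunov function $V_b(x(t), \sigma(t-1)) = x(t)^\prime P_{\sigma(t-1)} x(t)$.

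The main technical obstacle is the propagation of strict positivity to all intermediate $S_i(k)$'s. The statement only imposes $S_i(\tau) \succ 0$, and the telescoping LMI \eqref{eq:cond3a_stabz} is non-strict, so without extra care one cannot invert $S_i(k)$ to recover $K_i(k)$. Resolving this cleanly requires combining \eqref{eq:cond3b_stabz} (which positivizes $S_i(0)$) with the monotonicity induced by \eqref{eq:cond3a_stabz}, exactly as in Proposition \ref{prop:convex} but dualized and with the now time-varying closed-loop matrices $\tilde{A}_i(k)$; this is the crux that makes the change of variable $K_i(k) = U_i(k) S_i(k)^{-1}$ legitimate at every step of the dwell interval.
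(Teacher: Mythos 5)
Your skeleton is in fact the paper's own: the paper disposes of the equivalence (a)\,$\Leftrightarrow$\,(b) in one line, as a combination of statement (c) of Theorem \ref{th:affine}, Schur complements and the substitution $U_i(k)=K_i(k)S_i(k)$, which is exactly what you unfold. Two of your steps, however, have genuine problems.

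The step you yourself call the crux --- strict positivity of the intermediate $S_i(k)$, needed to legitimize $K_i(k)=U_i(k)S_i(k)^{-1}$ --- is false as stated, so it cannot be resolved ``exactly as in Proposition \ref{prop:convex} but dualized''. The conditions of (b) only pin down the endpoints: $S_i(\tau)\succ0$ by hypothesis and $S_i(0)\succeq S_j(\tau)+\eps I\succ0$ by \eqref{eq:cond3b_stabz}; the non-strict LMI \eqref{eq:cond3a_stabz} does not push positivity into the interior of the dwell interval. Concretely, take $N=2$ identical scalar modes with $A_i=B_i=1$, $\tau=2$, and $S_i(0)=2$, $S_i(1)=0$, $S_i(2)=1$, $U_i(0)=-2$, $U_i(1)=0$, $U_i(2)=-1$, $\eps=1$: then \eqref{eq:cond1c_stabz} reads $\diag(-1,-1)\prec0$, the two instances of \eqref{eq:cond3a_stabz} read $\diag(0,-2)\preceq0$ and $\diag(-1,0)\preceq0$, and \eqref{eq:cond3b_stabz} reads $0\le 0$, yet $S_i(1)=0$ is singular. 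Moreover, Proposition \ref{prop:convex} cannot be ``dualized with the time-varying closed-loop matrices'': its proof goes through the representation \eqref{eq:rik}, which in the synthesis setting would have to be written in terms of $A_i+B_iU_i(k)S_i(k)^{-1}$, i.e.\ precisely the objects whose existence you are trying to establish --- the argument is circular. A correct repair is to set $K_i(k)=U_i(k)S_i(k)^{+}$: since the negation of \eqref{eq:cond3a_stabz} is positive semidefinite, every $v\in\ker S_i(k)$ satisfies $(A_iS_i(k)+B_iU_i(k))v=0$, hence $(A_i+B_iK_i(k))S_i(k)=A_iS_i(k)+B_iU_i(k)$ and the generalized Schur complement yields $(A_i+B_iK_i(k))S_i(k)(A_i+B_iK_i(k))^\prime\preceq S_i(k+1)$, after which your telescoping goes through verbatim. (To be fair, the theorem's recovery formula and the paper's one-line proof silently carry the same defect; but your proof explicitly leans on the false positivity claim. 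A smaller remark of the same flavor: your strict inequality $\Psi_i(\tau)S_j(\tau)\Psi_i(\tau)^\prime\prec S_i(\tau)$ requires a kernel argument when $\Psi_i(\tau)$ is singular, since $\eps\Psi_i(\tau)\Psi_i(\tau)^\prime$ is then only semidefinite.)

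You also dismiss as ``immediate'' the only step the paper proves in full. Theorem \ref{th:geromel2} does not apply to the closed loop: within a mode the dynamics are time-varying ($K_i(k)$ during the first $\tau$ steps, then frozen at $K_i(\tau)$ by \eqref{eq:impsf}), and $\Psi_i(\tau)\ne(A_i+B_iK_i(\tau))^\tau$ in general, so \eqref{eq:cond1_stabz}--\eqref{eq:cond2_stabz} are not the hypotheses of that theorem for any switched linear system. What has to be checked --- and this is the paper's Step 1 --- is that over a dwell interval of length $k\ge\tau$ the transition matrix is $\Psi_i(k)=(A_i+B_iK_i(\tau))^{k-\tau}\Psi_i(\tau)$, that iterating \eqref{eq:cond1_stabz} gives $\Psi_i(k)^\prime P_i\Psi_i(k)\preceq\Psi_i(\tau)^\prime P_i\Psi_i(\tau)$, and hence by \eqref{eq:cond2_stabz} that $\Psi_i(k)^\prime P_i\Psi_i(k)-P_j\prec0$ for all $k\ge\tau$; only then does the decrease of $V_b$ across switching instants, and with it asymptotic stability under $\tau_q\ge\tau$, follow.
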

\begin{proof}
\textbf{Step 1:} The first step of the proof concerns the fact that statement (a) implies that the closed-loop system is stable with minimum dwell-time $\tau$. To show this, let $\Psi_i:\mathbb{N}\to\mathbb{R}^{n\times n}$ be defined as
\begin{equation}
  \Psi_i(k+1)=\left\{\begin{array}{lcl}
                \prod_{j=0}^{k}\bar{A}_{i,j},\  k=0,\ldots,\tau-1,\\
                \bar{A}_{i,\tau}^{k+1-\tau}\Psi_i(\tau),\  k\ge\tau.
  \end{array}\right.
\end{equation}
where $\bar{A}_{i,k}=A_i+B_iK_i(k)$. Assume $k\ge\tau$, then we have
\begin{equation}
\begin{array}{lcl}
    \Psi(k)^\prime P_i \Psi_i(k)&=&\Psi_i(\tau)^{\prime}\bar{A}_{i,\tau}^{\prime{(k-\tau)}}P_i\bar{A}_{i,\tau}^{k-\tau}\Psi_i(\tau)\\
    &\prec& \Psi_i(\tau)^{\prime}P_i\Psi_i(\tau)
\end{array}
\end{equation}
where the inequality has been obtained using condition \eqref{eq:cond1_stabz}. Therefore, conditions \eqref{eq:cond1_stabz} and \eqref{eq:cond2_stabz}, together, imply that
\begin{equation}
\begin{array}{lcl}
    \Psi(k)^\prime P_i \Psi_i(k)-P_j\prec0
\end{array}
\end{equation}
for all $k\ge\tau$, proving that the system is stable with minimum dwell-time $\tau$.

\textbf{Step 2:}  The equivalence between statements (a) and (b) follows from statement (c) of Theorem \ref{th:affine}, Schur complements and the change of variables $U_i(k)=K_i(k)S_i(k)$. The proof is complete.
\end{proof}


\begin{example}\label{eq:stabz}
  Let us consider the system \eqref{eq:impsystc} with matrices
  \begin{equation*}
    \begin{array}{lclcl}
      A_1&=&\begin{bmatrix}
       3.7   &   -6.5   &   -3.6     & -3.1    &   3.8   \\
   -2.1      & 1.6     &   0.3      & 1.8      &-1.8   \\
    1.3     & -1.9     & - 0.7     & -1.3      & 1.8   \\
    3.3     & -10      & -6.8     & -2.7     &  4.8   \\
   -1.9     & -3.2     & -3.9    &   2.1     & -0.9
      \end{bmatrix},\ B_1&=&\begin{bmatrix}
         0.1     &   0.1   \\
     0.8     &   0.6   \\
     0.3      &  0.8   \\
     0.9      &  0.7   \\
     0.8      &  0.9
      \end{bmatrix},
    \end{array}
  \end{equation*}
  \begin{equation*}
    \begin{array}{lclcl}
          A_2&=&\begin{bmatrix}
         0.7   &   - 0.7    &   1.7   &    1.3     & -0.6   \\
     2.1    &    0.5     & -0.3     & -0.6      & 1.6   \\
    -0.4     &  2.7     & -4.3     & -3.9      &  0.2   \\
    1.4     & -2.6    &   4.4      & 4      &   0.7   \\
   -0.8     &  1.2   &   -2      & -1.3    &    0.7
      \end{bmatrix},\ B_2&=&\begin{bmatrix}
         0.7      &  0.9   \\
     0.6       & 0.2   \\
     0.2     &   0.9   \\
       0       & 0.2   \\
     0.6          &   0
      \end{bmatrix}.
    \end{array}
  \end{equation*}
  This system turns out to be non-stabilizable under arbitrary switching when using the conditions of Corollary \ref{th:daafouz} as synthesis conditions. The stabilization problem, however, becomes solvable for $\tau=2$ using Theorem \ref{th:stabz}.
%
\end{example}

\section{$\ell_2$-gain computation under minimum dwell-time constraint}\label{sec:ext}

We extend here the proposed framework to the computation of an upper-bound on the $\ell_2$-gain of discrete-time switched systems under a mimum dwell-time constraint. To this aim, let us consider  the following discrete-time switched system
\begin{equation}\label{eq:l2syst}
  \begin{array}{rcl}
    x(t+1)&=&A_{\sigma(t)}x(t)+E_{\sigma(t)}w_{\sigma(t)}(t)\\
    z_{\sigma(t)}(t)&=&C_{\sigma(t)}x(t)+F_{\sigma(t)}w_{\sigma(t)}(t)
  \end{array}
\end{equation}
where $w_i\in\mathbb{R}^{p_i}$ and $z_i\in\mathbb{R}^{q_i}$ are the exogenous input and the controlled output of mode $i$, respectively. Since the dimension of the input and output signals vary over time, we define the $\ell_2$-gain of the system \eqref{eq:l2syst} under minimum dwell-time $\tau$ to be the smallest $\gamma>0$ verifying
\begin{equation}
  \sum_{q\in\mathbb{N}}\sum_{k=1}^{\tau_q-1}||z_{\sigma(\phi_q)}(\phi_q^k)||_2^2\le\gamma^2 \sum_{q\in\mathbb{N}}\sum_{k=1}^{\tau_q-1}||w_{\sigma(\phi_q)}(\phi_q^k)||_2^2
\end{equation}
where $\phi_q^k:=\phi_q+k$, for all $\tau_q\ge\tau$ and all $\phi_q\in\{1,\ldots,N\}$, $q\in\mathbb{N}$ along the trajectories of the system \eqref{eq:l2syst} with zero initial conditions.

We have the following result:
\begin{theorem}\label{th:affine_L2}
 The following statements are equivalent:
 \begin{enumerate}
   \item There exist matrices $P_i\in\mathbb{S}_{\succ0}^n$, $i=1,\ldots,N$ and a scalar $\gamma>0$ such that the LMIs
  \begin{equation}\label{eq:l2_a1}
  \begin{bmatrix}
    A_i^\prime P_iA_i-P_i+C_i^\prime C_i &  A_i^\prime P_iE_i+C_i^\prime F_i\\
    \star & E_i^\prime P_iE_i+F_i^\prime F_i-\gamma^2I_{p_i}
  \end{bmatrix}\prec0
  \end{equation}
  and
  \begin{equation}\label{eq:l2_a2}
  \beth_{ij}:=\begin{bmatrix}
      -P_j & 0\\
      \star & -\gamma^2I_{p_i}
    \end{bmatrix}+\gimel_i\begin{bmatrix}
      P_i & 0\\
      0 & I_{q_i}
    \end{bmatrix}\gimel_i^\prime\prec0
  \end{equation}
  with
\begin{equation*}
        \gimel_i:=\begin{bmatrix}
      A_i^{\prime\tau} &  \mathcal{C}_i(\tau)^\prime\\
      E_i^{\prime\tau} &  \mathcal{F}_i(\tau)^\prime
    \end{bmatrix}
\end{equation*}
  hold for all $i,j=1,\ldots,N$, $i\ne j$ where $\mathcal{C}_i(1)=C_i$, $\mathcal{E}_i(1)=E_i$  and $\mathcal{F}_i(1)=F_i$, and, when $\tau\ge2$,
  \begin{equation}
        \mathcal{C}_i(\tau)=\begin{bmatrix}
          C_i\\
          C_iA_i\\
          \vdots\\
          C_iA_i^{\tau-1}
        \end{bmatrix},\ \mathcal{E}_i(\tau)=\begin{bmatrix}
          (A_i^{\tau-1}E_i)^\prime\\
           (A_i^{\tau-2}E_i)^\prime\\
          \vdots\\
          E_i^\prime
        \end{bmatrix}^\prime
  \end{equation}
  and $\mathcal{F}_i(\tau)$ is the lower triangular Toeplitz matrix of Markov parameters $h_i(k)=C_iA_i^{k-1}E_i$, $k\ge1$, $h_i(0)=F_i$ up to order $k=\tau-1$. That is, the first column of $\mathcal{F}_i(\tau)$ is given by $\col(F_i,C_iE_i, C_iA_iE_i,\ldots,C_iA_i^{\tau-2}E_i)$.
   \item There exist matrix sequences $R_i:\{0,\ldots,\tau\}\to\mathbb{S}^n$, $R_i(0)\succ0$, $i=1,\ldots,N$, and scalars $\eps>0$, $\gamma>0$ such that the LMIs
   \begin{equation}\label{eq:LMIl2_1}
 \Xi^i(\tau):=\begin{bmatrix}
       \Xi_{11}^i(\tau,\tau)  &   \Xi_{12}^i(\tau)\\
      \star &  \Xi_{22}^i(\tau)
   \end{bmatrix}\prec0
   \end{equation}
    \begin{equation}\label{eq:LMIl2_2}
    \Xi^i(k+1,k):=\begin{bmatrix}
       \Xi_{11}^i(k+1,k)  &   \Xi_{12}^i(k+1)\\
      \star &  \Xi_{22}^i(k+1)
    \end{bmatrix}\preceq0
  \end{equation}
  and
  \begin{equation}\label{eq:LMIl2_3}
    R_i(0)-R_j(\tau)+\eps I\preceq0
  \end{equation}
  hold for all $i,j=1,\ldots,N$, $i\ne j$ and $k=0,\ldots,\tau-1$ where
  \begin{equation*}
    \begin{array}{rcl}
      \Xi_{11}^i(\theta_1,\theta_2)&=&A_i^\prime R_i(\theta_1)A_i-R_i(\theta_2)+C_i^\prime C_i\\
      \Xi_{12}^i(\theta)&=&A_i^\prime R_i(\theta)E_i+C_i^\prime F_i\\
      \Xi_{22}^i(\theta)&=&E_i^\prime R_i(\theta)E_i+F_i^\prime F_i-\gamma^2I_{p_i}.
    \end{array}
  \end{equation*}
     \item There exist matrix sequences $S_i:\{0,\ldots,\tau\}\to\mathbb{S}^n$, $S_i(\tau)\succ0$, $i=1,\ldots,N$, and scalars $\eps>0$, $\gamma>0$ such that the LMIs
   \begin{equation}
   \begin{bmatrix}
     \Gamma_{11}^i(\tau,\tau) & E_i & A_iS_i(\tau)C_i^\prime\\
     \star & -\gamma^2I_{p_i} & F_i^\prime\\
     \star  & \star & -I_{q_i}+C_iS_i(\tau)C_i^\prime
   \end{bmatrix}\prec0
   \end{equation}
    \begin{equation}
    \begin{bmatrix}
      \Gamma_{11}^i(k,k+1) &  E_i & A_iS_i(k)C_i^\prime\\
     \star & -\gamma^2I_{p_i} & F_i^\prime\\
     \star  & \star & -I_{q_i}+C_iS_i(k)C_i^\prime
    \end{bmatrix}\preceq0
  \end{equation}
  and
  \begin{equation}
    S_j(\tau)-S_i(0)+\eps I\preceq0
  \end{equation}
  hold for all $i,j=1,\ldots,N$, $i\ne j$ and $k=0,\ldots,\tau-1$ where
  \begin{equation*}
      \Gamma_{11}^i(\theta_1,\theta_2)=A_iS_i(\theta_1)A_i^{\prime}-S_i(\theta_2).
  \end{equation*}
   \end{enumerate}
   Moreover, when one of the above equivalent statements holds, then the switched system \eqref{eq:l2syst} is asymptotically stable for all switching-time sequences $\{\phi_s\}_{s\in\mathbb{N}}$ satisfying $\tau_s\ge \tau$ and the $\ell_2$-gain of the transfer $w\mapsto z$ is less than $\gamma$.
  \end{theorem}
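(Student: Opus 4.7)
The plan mirrors the scheme used for Theorem~\ref{th:affine}, adapted to accommodate the input--output coupling present in the bounded-real LMIs. I would establish $(b)\Rightarrow(a)$, then $(a)\Rightarrow(b)$, then $(b)\Leftrightarrow(c)$ by Schur complements, and finally derive the asymptotic stability and the $\ell_2$-gain bound from statement $(b)$.

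For $(b)\Rightarrow(a)$, the key observation is that \eqref{eq:LMIl2_2} encodes the pointwise dissipation inequality
\begin{equation*}
V_{k+1}(x(k+1))-V_k(x(k))+\|z(k)\|_2^2-\gamma^2\|w(k)\|_2^2\le 0
\end{equation*}
for the storage function $V_k(x)=x^\prime R_i(k)x$ along an open-loop trajectory of mode $i$. Summing this over $k=0,\ldots,\tau-1$ and substituting the lifted representations $x(\tau)=A_i^\tau x(0)+\mathcal{E}_i(\tau)W$ and $Z=\mathcal{C}_i(\tau)x(0)+\mathcal{F}_i(\tau)W$, with $W=\col(w(0),\ldots,w(\tau-1))$, produces the lifted bounded-real matrix inequality
\begin{equation*}
\gimel_i\,\diag(R_i(\tau),I)\,\gimel_i^\prime-\diag(R_i(0),\gamma^2 I)\preceq 0.
\end{equation*}
Setting $P_i=R_i(\tau)$ and invoking \eqref{eq:LMIl2_3} to replace $-R_i(0)$ by $-R_j(\tau)$ (which tightens the $(1,1)$ block by at least $\eps I$) yields \eqref{eq:l2_a2} strictly; condition \eqref{eq:l2_a1} is \eqref{eq:LMIl2_1} under the same substitution.

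For $(a)\Rightarrow(b)$, I would set $R_i(\tau)=P_i$ and generate $R_i(\tau-1),\ldots,R_i(0)$ by the backward Riccati recursion obtained from Schur-complementing \eqref{eq:LMIl2_2} at equality, so that every pointwise LMI is satisfied by construction. Iterating this recursion $\tau$ times and applying a second Schur complement reproduces the $(1,1)$-block of \eqref{eq:l2_a2} in the form $R_i(0)-P_j$, so that the nonconvex BRL from $(a)$ forces $R_i(0)-P_j\prec 0$, whence \eqref{eq:LMIl2_3} holds for a sufficiently small $\eps>0$. The equivalence $(b)\Leftrightarrow(c)$ then follows from the congruence transformation $S_i(k):=R_i(k)^{-1}$ together with a Schur complement on the output terms, exactly as in Theorem~\ref{th:affine}.

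The ``moreover'' assertion is handled by a standard switched-system Lyapunov argument. Define the time-varying storage function $V(x,t,\sigma)=x^\prime R_{\sigma(\phi_q)}(\min\{t-\phi_q,\tau\})x$ on $t\in[\phi_q,\phi_{q+1})$. Inequalities \eqref{eq:LMIl2_2} control the transient portion $t-\phi_q<\tau$, \eqref{eq:LMIl2_1} controls the post-transient portion $t-\phi_q\ge\tau$, and \eqref{eq:LMIl2_3} enforces a strict drop of $V$ across every switching instant, which absorbs any transient increase and delivers asymptotic stability in the unforced case. Summing the dissipation inequalities over $t\in\mathbb{N}$ from zero initial condition then yields $\|z\|_{\ell_2}\le\gamma\|w\|_{\ell_2}$. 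The main obstacle I expect is the well-posedness of the backward Riccati recursion used in $(a)\Rightarrow(b)$: one must argue that $\gamma^2 I-E_i^\prime R_i(k+1)E_i-F_i^\prime F_i\succ 0$ for every $k$, which should follow by propagating \eqref{eq:l2_a1} backward together with the monotonicity properties of the Riccati recursion.
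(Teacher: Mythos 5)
Your proposal is correct in substance, and for three of its four pieces it coincides with the paper's own proof: your $(b)\Rightarrow(a)$ direction (pointwise dissipation inequality for $V_k(x)=x^\prime R_i(k)x$ summed over the horizon, then lifted and combined with \eqref{eq:LMIl2_3}) is exactly the paper's argument, written with the lifted maps $x(\tau)=A_i^\tau x_0+\mathcal{E}_i(\tau)W$, $Z=\mathcal{C}_i(\tau)x_0+\mathcal{F}_i(\tau)W$ instead of the paper's vector $\zeta_q(\tau)=\col(x(\phi_q),w(\phi_q),\ldots,w(\phi_q+\tau-1))$; the equivalence $(b)\Leftrightarrow(c)$ is by Schur complements in both; and the stability/$\ell_2$-gain conclusion is the same dissipation-summation argument. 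The genuine difference is in $(a)\Rightarrow(b)$. The paper reverses its own computation: it pre- and post-multiplies $\beth_{ij}$ by $\zeta_q(\tau)$, introduces auxiliary matrices $R_i(k)$ realizing a telescoping identity whose existence is merely asserted to carry over from the proof of Theorem \ref{th:affine}, and then concludes from an ``independence of the outer vectors'' argument that each quadratic form $\Lambda^{\sigma(\phi_q)}(k)$ is semidefinite. Your backward Riccati recursion from $R_i(\tau)=P_i$, i.e. $R_i(k)=\mathcal{R}_i(R_i(k+1))$ with $\mathcal{R}_i(Q):=A_i^\prime QA_i+C_i^\prime C_i+(A_i^\prime QE_i+C_i^\prime F_i)(\gamma^2I-E_i^\prime QE_i-F_i^\prime F_i)^{-1}(E_i^\prime QA_i+F_i^\prime C_i)$, is a different and more constructive route: each step LMI \eqref{eq:LMIl2_2} holds with zero Schur complement by construction, and the dynamic-programming identity makes $x_0^\prime R_i(0)x_0$ the exact value of $\sup_W$ of the lifted quadratic form, so \eqref{eq:l2_a2} forces $R_i(0)\prec P_j$ directly, bypassing the paper's delicate independence step (delicate because $x(\phi_q+k)$ is \emph{not} independent of $x(\phi_q)$ and the earlier disturbances). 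Your flagged obstacle resolves exactly as you anticipate: the $(2,2)$ block of \eqref{eq:l2_a1} gives well-posedness at $R_i(\tau)=P_i$, its Schur complement gives $R_i(\tau-1)=\mathcal{R}_i(P_i)\prec P_i$, and monotonicity of $\mathcal{R}_i$ (clear from its representation as a supremum of functions affine in $Q$) propagates both $R_i(k)\preceq P_i$ and well-posedness down to $k=0$.

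Two small repairs are worth recording. First, equality-Riccati iterates are only guaranteed to satisfy $R_i(k)\succeq0$ (they can be singular when $A_i$ is), while statement (b) demands $R_i(0)\succ0$; running the recursion as $R_i(k)=\mathcal{R}_i(R_i(k+1))+\delta I$ fixes this while preserving \eqref{eq:LMIl2_2}, and the strict margin in \eqref{eq:l2_a2} keeps $R_i(0)\prec P_j$ for $\delta$ small enough, so \eqref{eq:LMIl2_3} survives. Second, in $(b)\Rightarrow(a)$ the non-strict step LMIs \eqref{eq:LMIl2_2} only yield $\zeta^\prime\beth_{ij}\zeta\le-\eps\|x_0\|_2^2$, which is not strict in the pure-disturbance directions $(0,W)$; this glossing is shared with the paper's own proof, and it is harmless because both statements quantify $\gamma$ existentially: replacing $\gamma^2$ by $\gamma^2+\delta$ adds $-\delta\|W\|_2^2$ to the summed estimate and renders $\beth_{ij}$ strictly negative definite.
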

  \begin{proof}
  The equivalence between statements b) and c) follows from Schur complements.

  \textbf{Proof of (b) $\Rightarrow$ (a):} First, choosing $R_i(\tau)=P_i$ shows that the conditions \eqref{eq:l2_a1} and \eqref{eq:LMIl2_1} are identical.  Let $\Lambda^i(k)$ be defined as
  \begin{equation}
    \Lambda^i(k):= \begin{bmatrix}
      x(\phi_q+k)\\
      w(\phi_q+k)
    \end{bmatrix}^\prime \Xi^{\sigma(\phi_q)}(k,k+1)\begin{bmatrix}
      x(\phi_q+k)\\
      w(\phi_q+k)
    \end{bmatrix}
  \end{equation}
  Let us define $V(x,i,k):=x^\prime R_i(k)x$, then from \eqref{eq:LMIl2_2}, we have that
  \begin{equation}
  \begin{array}{rcl}
    \Lambda^{\sigma(\phi_q)}(k)&=&V(x(\phi_q+k+1),{\sigma(\phi_q)},k+1)\\
   &&-V(x(\phi_q+k),{\sigma(\phi_q)},k)\\
    &&-\gamma^2w(\phi_q+k)^\prime w(\phi_q+k)\\
    &&+z(\phi_q+k)^\prime z(\phi_q+k)\le0.
  \end{array}
  \end{equation}
  Summing over $k$ yields
  \begin{equation}\label{eq:dlksmdskdl1}
    \begin{array}{rcl}
    \sum_{k=0}^{\tau-1}\Lambda^{\sigma(\phi_q)}(k)&=&V(x(\phi_q+\tau),{\sigma(\phi_q)},\tau)\\
    &&-V(x(\phi_q),{\sigma(\phi_q)},0)\\
    &&+\sum_{k=0}^{\tau-1}||z(\phi_q+k)||_2^2\\
    &&-\gamma^2\sum_{k=0}^{\tau-1}||w(\phi_q+k)||_2^2\le0
  \end{array}
  \end{equation}
  Considering now \eqref{eq:LMIl2_3}, we obtain that
    \begin{equation}\label{eq:dlksmdskdl2}
    \begin{array}{l}
  V(x(\phi_q+\tau),{\sigma(\phi_q)},\tau)-V(x(\phi_q),{\sigma(\phi_{q-1})},\tau)\\
    +\sum_{k=0}^{\tau-1}||z(\phi_q+k)||_2^2\\
    -\sum_{k=0}^{\tau-1}\gamma^2||w(\phi_q+k)||_2^2\le-\eps ||x(\phi_q)||_2^2.
  \end{array}
  \end{equation}
  This condition is equivalent to saying that
  \begin{equation}
   \zeta_q(\tau)^\prime \beth_{ij} \zeta_q(\tau)\prec0
  \end{equation}
  where $\zeta_q(\tau)=\col(x(\phi_q), w(\phi_q),\cdots, w(\phi_q+\tau-1))$, $i=\sigma(\phi_q)$ and $j=\sigma(\phi_{q-1})$. The result follows.

    \textbf{Proof of (a) $\Rightarrow$ (b):} The proof follows the same line as for standard stability. We define recursively the lifted LMI \eqref{eq:l2_a2} by first pre- and post-multiplying by  $\zeta_q(\tau)^\prime$ and  $\zeta_q(\tau)$. Then, we obtain exactly \eqref{eq:dlksmdskdl2} to which we apply the inverse procedure to get \eqref{eq:dlksmdskdl1} and \eqref{eq:LMIl2_3}, where we use the identity $P_i=R_i(\tau)$. Then, we introduce auxiliary matrices $R_i(k)$ such that
    \begin{equation}
    \begin{array}{l}
            V(x(\phi_q+\tau),{\sigma(\phi_q)},\tau)-V(x(\phi_q),{\sigma(\phi_{q-1})},0)\\
            =\sum_{k=0}^{\tau-1}\left[V(x(\phi_q+k+1),{\sigma(\phi_q)},k+1)\right.\\
            \qquad-\left.V(x(\phi_q+k),{\sigma(\phi_{q-1})},k)\right].
    \end{array}
    \end{equation}
    We know that these matrices exist for the proof of Theorem \ref{th:affine}. By gathering the terms in $k$, i.e. $R_i(k)$, $x(\phi_q+k)$, $w(\phi_q+k)$ and $z(\phi_q+k)$, we get that
    \begin{equation}
      \begin{array}{lcl}
         \zeta_q(\tau)^\prime \beth_{ij} \zeta_q(\tau)&=&\sum_{k=0}^{\tau-1}\Lambda^{\sigma(\phi_q)}(k)\preceq0
      \end{array}
    \end{equation}
    where $i=\sigma(\phi_q)$ and $j=\sigma(\phi_{q-1})$. Using finally the fact that each one of the outer vectors in the $\Lambda^{\sigma(\phi_q)}(k)$'s is independent of the others, this implies that all the quadratic forms $\Lambda^{\sigma(\phi_q)}(k)$'s are semidefinite, and the conclusion follows.

  \textbf{Proof of (b) $\Rightarrow$ $\ell_2$-gain is smaller than $\gamma>0$:}
  The sum \eqref{eq:dlksmdskdl2} can completed up to $k=\tau_q-1$ using \eqref{eq:LMIl2_2} and we get that
  \begin{equation}\label{eq:kdlsqL2}
    \begin{array}{l}
      V(x(\phi_q+\tau_q),{\sigma(\phi_q)},\tau)-V(x(\phi_q),{\sigma(\phi_{q-1})},\tau)\\
    +\sum_{k=0}^{\tau_q-1}||z(\phi_q+k)||_2^2\\
    -\sum_{k=0}^{\tau_q-1}\gamma^2||w(\phi_q+k)||_2^2\le-\eps ||x(\phi_q)||_2^2.
    \end{array}
  \end{equation}
  Since the LMIs \eqref{eq:LMIl2_1}-\eqref{eq:LMIl2_2}-\eqref{eq:LMIl2_3} implies stability under minimum dwell-time $\tau$ and that $\phi_{q+1}=\phi_q+\tau_q$, $\phi_0=0$, we have that $V(x(\phi_{q+1}),\sigma(\phi_q),\tau)\to0$ as $q\to\infty$. Summing then the inequality \eqref{eq:kdlsqL2} over $q$ yields that
  \begin{equation}
    \begin{array}{l}
    -V(x(0),\sigma(\phi_{-1}),\tau)+\sum_{k=0}^\infty ||z(k)||_2^2\\
    \qquad-\sum_{k=0}^{\infty}\gamma^2||w(k)||_2^2\le-\eps\sum_{q=0}^\infty||x(\phi_q)||_2^2
    \end{array}
  \end{equation}
  where $\sigma(\phi_{-1})\in\{1,\ldots,N\}$ is arbitrary. This finally gives that
  \begin{equation}
    \sum_{k=0}^\infty ||z(k)||_2^2<\sum_{k=0}^{\infty}\gamma^2||w(k)||_2^2+V(x(\phi_0),\sigma(\phi_{-1}),\tau)
  \end{equation}
from which the $\ell_2$-gain conclusion follows.
  \end{proof}

It seems important to stress that the proposed method is simpler than the one reported in \cite{Colaneri:11}. The dwell-time idea is the same, that is, taken from the paper \cite{Geromel:06d}. The approach to compute the $\ell_2$-gain, however, is more complex in \cite{Colaneri:11} since it relies on some approximate matrix computations. The proposed approach circumvents this and solves the problem in a more direct manner. Note, moreover, that the current approach is clearly applicable in a stabilization and uncertain contexts, unlike the method of \cite{Colaneri:11} which involves matrix powers. 

\begin{example}\label{ex:l2}
  Let us consider the example considered in \citep{Colaneri:11}
  \begin{equation}
    \begin{array}{lcl}
      \begin{bmatrix}
        A_1 & \vline & E_1\\
        \hline
        C_1 & \vline & F_1
      \end{bmatrix}&=&\begin{bmatrix}
            0 & 0.25 & 0 & \vline & 1\\
            1 & 0 & 0 & \vline & 0\\
            0 & 1 & 0 & \vline & 0\\
            \hline
            0 & 0 & 0.7491 & \vline & 0
      \end{bmatrix},\\
      \begin{bmatrix}
        A_2 & \vline & E_2\\
        \hline
        C_2 & \vline & F_2
      \end{bmatrix}&=&\begin{bmatrix}
            -2 & -1.5625 & -0.4063 & \vline & 1\\
            1 & 0 & 0 & \vline & 0\\
            0 & 1 & 0 & \vline & 0\\
            \hline
            0.0964 & 0.0964 & 0.0964 & \vline & 0
      \end{bmatrix},\\
      \begin{bmatrix}
        A_3 & \vline & E_3\\
        \hline
        C_3 & \vline & F_3
      \end{bmatrix}&=&\begin{bmatrix}
        1 & -0.5625 & 0.1563 & \vline & 1\\
        1 & 0 & 0 & \vline & 0\\
        0 & 1 & 0 & \vline & 0\\
        \hline
        0.2031 & 0.0444 & 0.1174 & \vline & 0.1015
      \end{bmatrix}.
    \end{array}
  \end{equation}
 Using the results of the paper, we find that the system is not stable for a dwell-time $\tau<5$. Exactness of this result is confirmed by the fact that the product $A_2^4A_3^4$ has an eigenvalue outside the unit disc. We compute then the $\ell_2$-gain of the above system using the statement b) of Theorem \ref{th:affine_L2} for different values for $\tau$ ranging from 5 to 40. We obtain the curve depicted in Figure \ref{fig:l2} which is very similar to the one obtained in \cite{Colaneri:11}.
 \begin{figure}[h]
 \centering
   \includegraphics[width=0.5\textwidth]{./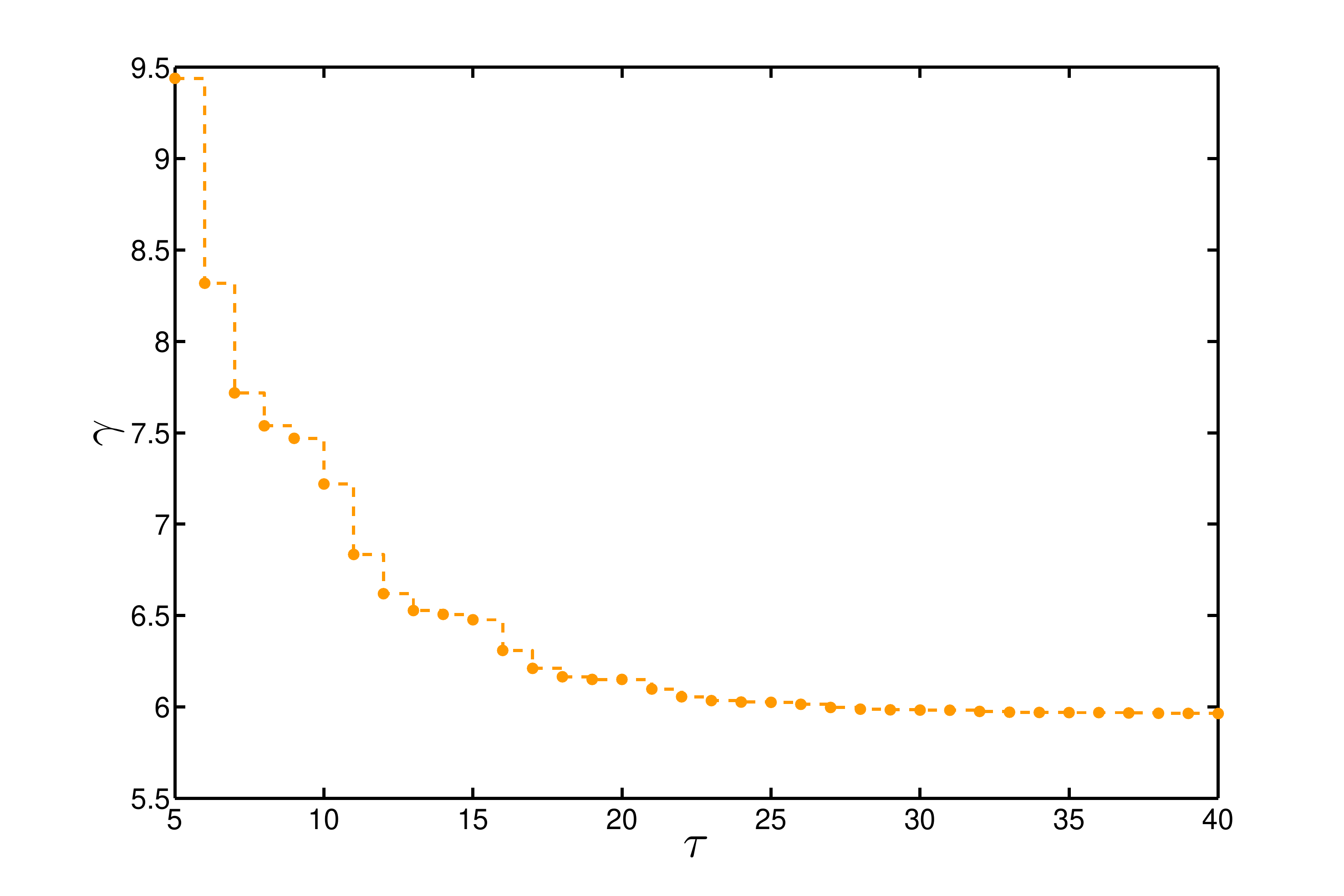}
   \caption{$\ell_2$-gain vs.  minimal dwell-time $\tau$ for the system of Example \ref{ex:l2}.}\label{fig:l2}
 \end{figure}
\end{example}

\begin{example}\label{ex:l2stabz}
Let us consider here the following open-loop system
\begin{equation}\label{eq:oll2}
  \begin{array}{rcl}
    x(t+1)&=&A_{\sigma(t)}x(t)+B_{\sigma(t)}u_{\sigma(t)}(t)+E_{\sigma(t)}w_{\sigma(t)}(t)\\
    z_{\sigma(t)}(t)&=&C_{\sigma(t)}x(t)+D_{\sigma(t)}u_{\sigma(t)}(t)+F_{\sigma(t)}w_{\sigma(t)}(t)
  \end{array}
\end{equation}
where, as before, $u$ is the control input, $w$ is the exogenous input and $z$ is the controlled output. Again, the dimension of these signals may vary over time. The goal is to design a controller of the form \eqref{eq:impsf} such that the closed-loop system is stable with minimum dwell-time $\tau$ and has suboptimal minimal $\ell_2$-gain. Let us choose then the matrices $F_1=F_2=D_1=D_2=0$. $C_1=C_2=\begin{bmatrix}
  1 & 0
\end{bmatrix}$, $B_1=B_2=\begin{bmatrix}
  1 & 0
\end{bmatrix}^\prime$,
\begin{equation*}
  A_1=\begin{bmatrix}
    1 & 2\\ 3 & 1
  \end{bmatrix}\ \textnormal{and}\   A_2=\begin{bmatrix}
    1 & 2\\ -8 & 1
  \end{bmatrix}.
\end{equation*}
We then design controllers of the form \eqref{eq:impsf} that suboptimally minimize the $\ell_2$-gain $\gamma$ of the closed-loop system with respect to some minimum dwell-time constraint. This is done by  suitably adapting the conditions of Theorem \ref{th:stabz}. The results are depicted in Figure \ref{fig:l2stabz} where we have plotted the suboptimal $\gamma$'s with respect to $\tau$. This system is not stabilizable for $\tau=1$, so the plot starts at $\tau=2$. We can observe that by increasing the minimum dwell-time, the $\ell_2$-gain can be made smaller.
\end{example}

 \begin{figure}[h]
 \centering
   \includegraphics[width=0.5\textwidth]{./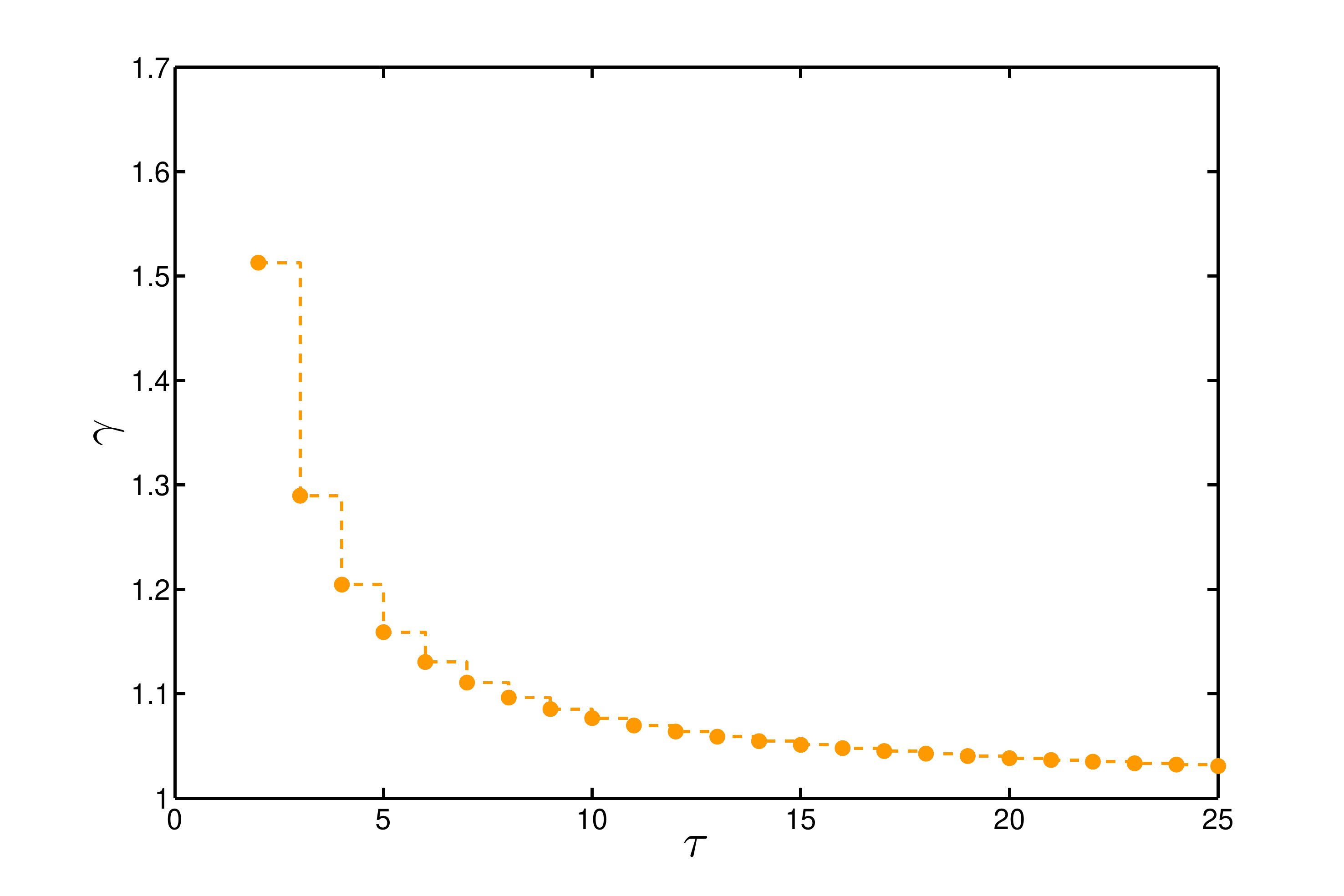}
   \caption{Suboptimal $\ell_2$-gain $\gamma$ vs.  minimal dwell-time $\tau$ for the system of Example \ref{ex:l2stabz}.}\label{fig:l2stabz}
 \end{figure}

%

\section{Conclusion}

New conditions for the analysis of discrete-time switched systems have been proposed. They have been shown to be equivalent to the minimum dwell-time conditions of \citep{Geromel:06d}. Due to their convex structure, the conditions can be extended to the case of uncertain switched with time-varying uncertainties and to control design using a non-restrictive class of time-varying controllers. Following the same ideas, a convex criterion for computing the $\ell_2$-gain of a discrete-time switched system under a minimum dwell-time constraint has been provided. The interest of the approach also lies on its possible generalization to nonlinear systems, LPV systems and to systems with time-varying delays \citep{Hetel:08b}. Other performance criteria such as the $\ell_2$-$\ell_\infty$-gain, or the $\ell_1$-gain can be easily considered as well.

Practical applications can be found in the analysis and control of networked control systems; see e.g. \citep{Donkers:11,Deaecto:13}. The latter paper addresses the problem of scheduling between different discrete-time systems obtained from the discretization of a continuous-time system using different sampling periods. The authors then develop a suitable scheduling strategy using the theory of discrete-time switched systems. The authors, however, mention a difficulty of their method which is the co-design of controllers (e.g. state-feedback)  and the scheduling strategy. The approach of this paper can be adapted to solve this problem. This is left for future research.

\bibliographystyle{plainnat}
\bibliography{../../../../Lastbib/global,../../../../Lastbib/briat}

\end{document}